
\documentclass{siamltex}
\usepackage{amsmath}
\usepackage{amsfonts}
\usepackage{amssymb}
\usepackage{graphicx}
\usepackage{color}
\usepackage{bm}
\usepackage{caption}
\usepackage{verbatim}
\usepackage{hyperref}
\usepackage{bmpsize}
\allowdisplaybreaks

\newtheorem{algorithm}[theorem]{Algorithm}

\newcommand{\be}{\begin{equation}}
\newcommand{\ee}{\end{equation}}
\newcommand{\bea}{\begin{eqnarray}}
\newcommand{\eea}{\end{eqnarray}}
\newcommand{\beas}{\begin{eqnarray*}}
\newcommand{\eeas}{\end{eqnarray*}}

\newcommand{\vertiii}[1]{{\left\vert\kern-0.25ex\left\vert\kern-0.25ex\left\vert #1 
    \right\vert\kern-0.25ex\right\vert\kern-0.25ex\right\vert}}

\begin{document}
\title{An efficient, partitioned ensemble algorithm for simulating ensembles of evolutionary MHD flows at low magnetic Reynolds number}

\author{
Nan Jiang\thanks{Department of Mathematics and Statistics,  
Missouri University of Science and Technology,
Rolla, MO 65409-0020, {\tt jiangn@mst.edu}. This author was partially supported by the US National Science Foundation grant DMS-1720001 and a University of Missouri Research Board grant}.
\and Michael Schneier \thanks{Department of Scientific Computing, Florida State University, Tallahassee, FL 32306-4120,
 {\tt mschneier89@gmail.com}. This author was supported by the US Air Force Office of Scientific Research grant FA9550-15-1-0001 and US Department of Energy Office of Science grants DE-SC0009324 and DE-SC0010678.
}
}
\maketitle

\begin{abstract}
Studying the propagation of uncertainties in a nonlinear dynamical system usually involves generating a set of samples in the stochastic parameter space and then repeated simulations with different sampled parameters. The main difficulty faced in the process is the excessive computational cost. In this paper, we present an efficient, partitioned ensemble algorithm to determine multiple realizations of a reduced Magnetohydrodynamics (MHD) system, which models MHD flows at low magnetic Reynolds number. The algorithm decouples the fully coupled problem into two smaller sub-physics problems, which reduces the size of the linear systems that to be solved and allows the use of optimized codes for each sub-physics problem. Moreover, the resulting coefficient matrices are the same for all realizations at each time step, which allows faster computation of all realizations and significant savings in computational cost. We prove this algorithm is first order accurate and long time stable under a time step condition. Numerical examples are provided to verify the theoretical results and demonstrate the efficiency of the algorithm.
\end{abstract}

\begin{keywords}
MHD, low magnetic Reynolds number, uncertainty quantification, ensemble algorithm, finite element method, partitioned method
\end{keywords}
\section{Introduction}

Magnetohydrodynamics (MHD) studies the dynamics of electrically conducting fluids in the presence of a magnetic field. It has many applications in astrophysics, planetary science, plasma physics and metallurgical industries, such as MHD turbulence in accretion disks \cite{Accretion}, geodynamo simulations \cite{geodynamo}, plasma containment in fusion reactors \cite{plasma} and magnetic damping of jets and vortices \cite{damping}. In a typical laboratory or industrial process, liquid-metal MHD usually has a modest conductivity ($\sim 10^6 \,\Omega^{-1}m^{-1}$) and low velocity ($\sim 1m/s$), which makes the induced current densities rather modest. When this modest current density is spread over a small area ($\sim 0.1m$ in a laboratory), the induced magnetic field is usually found to be negligible by comparison with the imposed magnetic field, \cite{Davidson01}. Such flows, i.e. MHD flows that occur at low magnetic Reynolds number, can be modeled by the following reduced MHD system,  \cite{Gunzburger91, Ingram13, Layton2014, Peterson88}. 

Let $\Omega$ be a bounded Lipschitz domain in $R^d$ $(d=3)$. The governing equations of the reduced MHD system are: Given known body force $f(x,t)$ and imposed static magnetic field $B(x)$, find the fluid velocity $u(x,t)$, the pressure $p(x,t)$ and the electric potential $\phi(x,t)$ such that
\begin{equation}\label{eq:MHD}
\left\{\begin{aligned}
&\frac{1}{N} \left( u_{t}+u\cdot\nabla u \right) - \frac{1}{M^{2}}\triangle u+\nabla p  
=f +\left( B\times \nabla \phi + B\times (B\times u)\right),\\
&\Delta \phi = \nabla \cdot (u\times B) \text{ and } \nabla \cdot u =\nabla \cdot B =0, \quad\forall (x,t)\in\Omega\times(0,T],\\
&u=\phi   =0, \quad\forall (x,t)\in\partial\Omega\times(0,T],\\
&u(x,0)   =u^{0}(x), \quad\forall x\in\Omega,\\
&\phi(x,0)    =\phi^{0}(x), \quad\forall x\in\Omega,
\end{aligned}\right.
\end{equation}
\noindent where $M$ is the Hartman number given by $M=\tilde{B}L\sqrt{\frac{\sigma}{\rho \nu}}$ and N is the interaction parameter given by $N=\sigma \tilde{B}^2 \frac{L}{\rho U}$, in which $\tilde{B}$ is the characteristic magnetic field, $\rho$ is the density, $\nu$ is the kinematic viscosity, $\sigma$ is the electrical conductivity, $U$ is a typical velocity of the motion, $L$ is the characteristic length scale.

Nonlinear dynamical systems such as the MHD system are sensitive to small changes in initial conditions, boundary conditions, body forces and many other input parameters. It is important to understand and quantify the limits of predictability of the system, and to develop computational approaches to reduce simulation time and computational cost while preserving a certain degree of accuracy. Most approaches to represent the uncertainties are ensemble based. Specifically, an ensemble of samples are generated to represent possible events, and then individual simulations are run for each sample. These computations are usually very expensive, and even prohibitive, especially if the size of the ensemble is large. Recently a new ensemble algorithm was proposed for fast calculation of an ensemble of the Navier-Stokes equations \cite{JL14}, which constructs linear systems with the same coefficient matrix for all realizations at each time step and thus allows the use of the either direct methods such as the LU factorization or iterative methods such as block CG \cite{FOP95}, block GMRS \cite{GS96} for fast solving the linear systems. In this report, we extend the ensemble algorithm studied in \cite{JL14} to the reduced MHD system.

Herein we consider computing the reduced MHD system $J$ times with different initial conditions and/or body forces. The solution $(u_j, p_j, \phi_j)$ of $j$-th realization, which corresponds to the initial condition $u^0_j(x)$ and body force $f_j(x,t)$, satisfies, for $j=1,2,...,J$,
\begin{equation}\label{eq:MHD-ensemble}
\left\{\begin{aligned}
&\frac{1}{N} \left( u_{j,t}+u_j\cdot\nabla u_j \right) - \frac{1}{M^2} \triangle u_j+\nabla p_j  
=f_j + \left( B \times \nabla \phi_j + B \times (B \times u_j)\right),\\
&\Delta \phi_j = \nabla \cdot (u_j\times B) \text{ and } \nabla \cdot u_j =\nabla \cdot B =0\quad\forall (x, t)\in\Omega\times(0,T],\\
&u_j=B   =0\quad\forall( x, t)\in\partial\Omega\times(0,T],\\
&u_j(x,0)   =u_j^{0}(x)\quad\forall x\in\Omega,\\
&\phi_j(x,0)    =\phi_j^{0}(x)\quad\forall x\in\Omega.
\end{aligned}\right.
\end{equation}

Two aspects need to be considered to construct an efficient ensemble algorithm to solve the above coupled nonlinear system. The first is to use a partitioned method to uncouple the problem into two separate subproblems. This reduces solving a large linear system to solving two much smaller linear systems, which reduces the computational time and memory storage required. Furthermore, uncoupling the system also makes possible the use of highly optimized legacy code for each sub-physics problem, which reduces the main computational complexity. The other aspect is to design an ensemble algorithm for the reduced MHD system such that all ensemble members share one coefficient matrix at each time step. 

To start, we first define the ensemble mean of the velocity $u_j^n$ and the electric potential $\phi_j^n$ respectively
\begin{align}
\bar{u}^n=\frac{1}{J}\sum_{j=1}^J u_j^n \quad\text{ and } \quad\bar{\phi}^n=\frac{1}{J}\sum_{j=1}^J \phi_j^n,
\end{align}
where $u_j^n= u_j(x, t_n)$, $\phi_j^n=\phi_j(x,t_n)$ and $t_n= n\Delta t$ ($n=0,1,2, ...$).

We then propose a first order, partitioned, ensemble algorithm given by

\begin{algorithm}\label{Algoo}
Sub-problem 1: Given $u_{j}^n$ and $\phi_{j}^n$, find $u_{j}^{n+1}$ and $p_{j}^{n+1}$ satisfying
\begin{equation}
\left\{\begin{aligned}
&\frac{1}{N}\left(\frac{u_{j}^{n+1}-u_{j}^n}{\Delta t}\right)+\frac{1}{N}\bar{u}^n\cdot\nabla u_{j}^{n+1}+\frac{1}{N}(u_{j}^n-\bar{u}^n)\cdot\nabla u_{j}^n - \frac{1}{M^2}\Delta u_{j}^{n+1}\\
&\qquad\qquad\qquad\qquad+\nabla p_{j}^{n+1}= f^{n+1}_j+\left( B\times \nabla \phi_j^n + B\times (B\times u_j^{n+1})\right),\nonumber\\
&\nabla \cdot u_{j}^{n+1}=0. \nonumber
\end{aligned}\right. 
\end{equation}

Sub-problem 2: Given $u_{j}^{n}$, find $\phi_{j}^{n+1}$ satisfying

\begin{equation}
\Delta \phi_j^{n+1} = \nabla \cdot (u_j^n\times B) .  \nonumber
\end{equation}

\end{algorithm}

In Sub-problem 1, moving all the known quantities (at time level $t_n$) to the right hand side, one can see all ensemble members $u_j$ have the same coefficient matrix. Sub-problem 2 is a linear problem for $\phi_j$ that results in one common constant coefficient matrix for all realizations. Sub-Problem 1 and 2 are fully uncoupled at each time step and can be run in parallel. Naturally, if the ensemble is large, it can be divided into several subgroups and then one can apply the algorithm to each subgroup.

This paper is organized into four sections. In Section $2$ we establish the notation and give a weak formulation of the reduced MHD system. In Section $3$ we prove the long-time stability of the proposed algorithm under a timestep condition. In Section $4$ we present the convergence analysis of the algorithm. Several numerical examples are presented in Section $5$ to describe the implementation of the algorithm and to demonstrate its efficiency.

\subsection{Previous works on ensemble methods}
The ensemble method was first proposed by Jiang and Layton in \cite{JL14} to efficiently compute ensembles of Navier-Stokes equations with low/modest Reynolds numbers. For high Reynolds number flows, two ensemble eddy viscosity regularization methods were studied in \cite{JL15}, and a time relaxation algorithm in \cite{TNW16} . Higher order ensemble methods can be found in \cite{J15, J17}. To further reduce the computation cost, incorporating reduced order modeling techniques with the ensemble algorithm was investigated in \cite{GJS17, GJS16}. An ensemble algorithm for computing flows with varying model parameters were developed in \cite{GJW16, GJW17}. The ensemble method has also been extended for computing full MHD flows in Els$\ddot{s}$sser variables in \cite{MR17}.

\section{Notation and preliminaries}

Throughout this paper the $L^2(\Omega)$ norm of scalars, vectors, and tensors will be denoted by $\Vert \cdot\Vert$ with the usual $L^2$ inner product denoted by $(\cdot, \cdot)$.  $H^{k}(\Omega)$ is the Sobolev space $%
W_{2}^{k}(\Omega)$, with norm $\Vert\cdot\Vert_{k}$. For functions $v(x,t)$ defined on $(0,T)$, we define the norms, for
$1\leq m<\infty$,
\[
\| v \|_{\infty,k} \text{ }:=EssSup_{[0,T]}\| v(\cdot, t)\|_{k}\qquad \text{and}\qquad \|v\|_{m,k} \text{ }:= \Big(  \int_{0}^{T}\|v(\cdot, t)\|_{k}^{m}\, dt\Big)
^{1/m} \text{ .}%
\]
The function spaces we consider are:
\begin{align*}
X:&=H_{0}^{1}(\Omega )^{d}=\left\lbrace v\in L^2(\Omega)^d: \nabla v\in L^2(\Omega)^{d\times d}\text{ and }v=0 \text{ on } \partial \Omega\right\rbrace,\\
Q : &=L_{0}^{2}(\Omega )=\left\lbrace q\in L^2(\Omega): \int_{\Omega} q \text{ }dx=0\right\rbrace,\\
S : &=H_{0}^{1}(\Omega )=\left\lbrace \phi\in L^2(\Omega): \nabla \phi\in L^2(\Omega)\text{ and }\phi=0 \text{ on } \partial \Omega\right\rbrace,\\
V : &=\left\lbrace v\in X:  (\nabla\cdot v, q)=0, \forall q\in Q \right\rbrace.
\end{align*}

The norm on the dual space of $X$ is defined by
\begin{equation*}
\Vert f\Vert _{-1}=\sup_{0\neq v\in X}\frac{(f,v)}{\Vert
\nabla v\Vert }\text{ .}
\end{equation*}

A weak formulation of the reduced MHD equations is: Find $u: [0,T]\rightarrow X$, $p: [0,T]\rightarrow Q$, and $\phi :  [0,T] \rightarrow S$ for a.e. $t\in (0,T]$ satisfying
\begin{align}
&\frac{1}{N} \left( u_{j,t}, v\right)+\frac{1}{N}\left( u_j\cdot\nabla u_j, v \right) +\frac{1}{M^2} \left(\nabla  u_j, \nabla v\right)- \left( p_j, \nabla \cdot v\right)\\  
&\qquad\qquad\qquad\qquad\qquad+\left( -\phi_j + u_j\times B, v\times B\right)=\left( f_j, v\right), \quad \forall v\in X, \nonumber\\
&\left( \nabla \cdot u_j, q\right)=0, \quad \forall q\in Q,\nonumber\\
&-\left( \nabla \phi_j, \nabla \psi\right) +\left(  u_j\times B, \nabla \psi\right)=0,\quad \forall \psi \in S. \nonumber
\end{align}

We will use the discrete
Gronwall inequality (Lemma \ref{lm:Gronwall} below) in the error analysis, see \cite{HR90} for proof.

\begin{lemma}\label{lm:Gronwall}
 Let $ D \geq 0$ and $\kappa_n, A_n,B_n, C_n \geq 0$ for
any integer $n\geq 0$ and satisfy

$$A_{\tilde{N}}+\Delta t \sum_{n=0}^{\tilde{N}}B_n\leq \Delta t \sum_{n=0}^{\tilde{N}}\kappa_n A_n+\Delta t \sum_{n=0}^{\tilde{N}} C_n+D \text{ for } \tilde{N}\geq 0.$$

Suppose that for all n, $\Delta t \kappa_n \leq 1,$ and set
$g_n=(1-\Delta t \kappa_n)^{-1}$. Then,

$$A_{\tilde{N}}+\Delta t \sum_{n=0}^{\tilde{N}}B_n \leq exp(\Delta t \sum_{n=0}^{\tilde{N}} g_n\kappa_n)[\Delta t \sum_{n=0}^{\tilde{N}} C_n+D] \text{ for } \tilde{N}\geq 0. $$

\end{lemma}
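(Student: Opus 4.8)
The plan is to establish the discrete Gronwall inequality by reducing it to a one-step recursion for an auxiliary cumulative quantity and then unrolling that recursion; this is a standard argument (cf. \cite{HR90}). Since $g_n = (1-\Delta t\kappa_n)^{-1}$ only makes sense when $\Delta t\kappa_n < 1$, I would first dispose of the borderline case: if $\Delta t\kappa_n = 1$ for some $n \le \tilde N$ the right-hand side is $+\infty$ and there is nothing to prove, so from here on assume $\Delta t\kappa_n < 1$, hence $g_n \ge 1$, for all $n \le \tilde N$.

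Next, set $E_{\tilde N} := \Delta t\sum_{n=0}^{\tilde N}\kappa_n A_n + \Delta t\sum_{n=0}^{\tilde N} C_n + D$ for $\tilde N \ge 0$ and $E_{-1} := D$. The hypothesis reads $A_{\tilde N} + \Delta t\sum_{n=0}^{\tilde N} B_n \le E_{\tilde N}$; since every $B_n \ge 0$, in particular $A_{\tilde N} \le E_{\tilde N}$. Combining the identity $E_{\tilde N} - E_{\tilde N-1} = \Delta t\kappa_{\tilde N} A_{\tilde N} + \Delta t C_{\tilde N}$ with $A_{\tilde N} \le E_{\tilde N}$ gives $(1-\Delta t\kappa_{\tilde N})E_{\tilde N} \le E_{\tilde N-1} + \Delta t C_{\tilde N}$, i.e. the recursion $E_{\tilde N} \le g_{\tilde N} E_{\tilde N-1} + g_{\tilde N}\Delta t C_{\tilde N}$.

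Iterating this recursion down to $E_{-1} = D$ yields $E_{\tilde N} \le \left(\prod_{n=0}^{\tilde N} g_n\right) D + \sum_{n=0}^{\tilde N}\left(\prod_{m=n}^{\tilde N} g_m\right)\Delta t C_n$, and since each $g_m \ge 1$ every partial product $\prod_{m=n}^{\tilde N} g_m$ is at most $\prod_{n=0}^{\tilde N} g_n$; hence $E_{\tilde N} \le \left(\prod_{n=0}^{\tilde N} g_n\right)\left[\Delta t\sum_{n=0}^{\tilde N} C_n + D\right]$. Feeding this back into $A_{\tilde N} + \Delta t\sum_{n=0}^{\tilde N} B_n \le E_{\tilde N}$ gives the stated bound with the product $\prod_{n=0}^{\tilde N} g_n$ in place of the exponential. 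To finish, I would replace the product by the exponential using the elementary inequality $-\ln(1-x) \le x/(1-x)$ for $0 \le x < 1$ (immediate from comparing the series $\sum_{k\ge 1} x^k/k$ and $\sum_{k\ge 1} x^k$); with $x = \Delta t\kappa_n$ this gives $\prod_{n=0}^{\tilde N} g_n = \exp\left(-\sum_{n=0}^{\tilde N}\ln(1-\Delta t\kappa_n)\right) \le \exp\left(\Delta t\sum_{n=0}^{\tilde N} g_n\kappa_n\right)$, completing the proof.

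There is no real obstacle here; the only points demanding care are the choice of the cumulative quantity $E_n$ so that the recursion closes, the use of $B_n \ge 0$ to pass from $A_n + \Delta t\sum B_m \le E_n$ to $A_n \le E_n$ while still keeping the $B$-sum in the final estimate, and the monotonicity observation $g_m \ge 1$ that lets all telescoped products be absorbed into the single product $\prod_{n=0}^{\tilde N} g_n$.
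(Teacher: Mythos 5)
Your proof is correct. Note that the paper itself does not prove this lemma at all: it is quoted as a known result with the proof deferred to Heywood and Rannacher \cite{HR90}, so there is no internal argument to compare against. Your argument is essentially the standard one for this implicit form of the discrete Gronwall inequality: use $1-\Delta t\kappa_{\tilde N}>0$ to absorb the $A_{\tilde N}$-term appearing on the right, unroll the resulting one-step recursion, and then convert the product $\prod_{n=0}^{\tilde N} g_n$ into the exponential via $-\ln(1-x)\le x/(1-x)$, which is exactly where the factor $g_n\kappa_n$ (rather than $\kappa_n$) in the exponent comes from. Your repackaging through the cumulative quantity $E_{\tilde N}$, together with the observation that $B_n\ge 0$ lets you drop the $B$-sum when bounding $A_{\tilde N}\le E_{\tilde N}$ and reinstate it only at the end, is a clean way to close the recursion; the induction in \cite{HR90} is organized slightly differently but is the same mechanism. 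One small imprecision, which lies in the quoted statement rather than in your proof: with the non-strict hypothesis $\Delta t\kappa_n\le 1$, your dismissal of the borderline case $\Delta t\kappa_n=1$ as ``right-hand side $+\infty$'' breaks down in the degenerate situation $\Delta t\sum_{n=0}^{\tilde N}C_n+D=0$, where the right side is the indeterminate $0\cdot\infty$ (and the claimed bound can in fact fail, since then only $B_{\tilde N}=0$ is forced, not $A_{\tilde N}=0$); the original lemma in \cite{HR90} assumes the strict inequality $\Delta t\kappa_n<1$, under which your argument is complete as written.
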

We denote conforming velocity, pressure, potential finite element spaces based on an edge to edge
triangulation ($d=2$) or tetrahedralization ($d=3$) of $\Omega $ with maximum element diameter $h$ by 
\begin{equation*}
X_{h}\subset X\text{ }\text{, }Q_{h}\subset
Q\text{, }S_{h}\subset
S.
\end{equation*}%
We also assume the finite element spaces ($X_{h}$, $Q_{h}$) satisfy the usual discrete inf-sup /$ LBB^{h}$
condition for stability of the discrete pressure, see \cite{G89} for more on this condition. Taylor-Hood elements, e.g., \cite{BS08}, \cite{G89}, are one such choice used in the tests in Section $5$. We further assume the finite element spaces satisfy the approximation properties of piecewise polynomials on quasiuniform meshes
\begin{align}
\inf_{v_h\in X_h}\| v- v_h \|&\leq C h^{k+1}\Vert u \Vert_{k+1}	   &\forall v\in [H^{k+1}(\Omega)]^d, \label{Interp1}\\
\inf_{v_h\in X_h}\| \nabla ( v- v_h )\|&\leq C h^k \Vert v\Vert_{k+1}&\forall v\in [H^{k+1}(\Omega)]^d, \label{interp2}\\
\inf_{q_h\in Q_h}\| q- q_h \|&\leq C h^{s+1}\Vert p\Vert_{s+1}	   &\forall q\in H^{s+1}(\Omega),      \label{interp3}\\
\inf_{\psi_h\in S_h}\| \psi- \psi_h \|&\leq C h^{m+1}\Vert \psi \Vert_{m+1}	   &\forall \psi\in H^{m+1}(\Omega), \label{Interp4}\\
\inf_{\psi_h\in S_h}\| \nabla ( \psi- \psi_h )\|&\leq C h^m \Vert \psi\Vert_{m+1}&\forall \psi\in H^{m+1}(\Omega), \label{interp5}
\end{align}

\noindent where the generic constant $C>0$ is independent of mesh size $h$.  
An example for which the $LBB_h$ stability condition and the approximation properties are satisfied is the finite elements pair ($P^{k+1}$--$P^{k}$--$P^{k+1}$), $k\geq 1$.
For finite element methods see
\cite{GR79,GR86, G89,Layton08} for more details.

The discretely divergence free
subspace of $X_{h}$ is 
\begin{equation*}
V_{h} :\text{}=\{v_{h}\in X_{h}:(\nabla \cdot v_{h},q_{h})=0\text{ , }%
\forall q_{h}\in Q_{h}\}.
\end{equation*}
We assume the mesh and finite element spaces satisfy the standard inverse inequality 
\begin{equation}\label{inverse}
h\Vert\nabla v_{h}\Vert   \leq C_{(inv)}\Vert v_{h}\Vert. 
\end{equation}
that is known to hold for standard finite element spaces with locally quasi-uniform meshes \cite{BS08}. 
We also define the standard explicitly skew-symmetric trilinear form
\[
b^{\ast}(u,v,w):=\frac{1}{2}(u\cdot\nabla v,w)-\frac{1}{2}(u\cdot\nabla w,v) 
\]
that satisfies the bound \cite{Layton08}
\begin{gather}
b^{\ast}(u,v,w)\leq C \Vert \nabla u\Vert\Vert\nabla v\Vert\Vert\nabla
w \Vert, \quad \forall\, u, v, w \in X ,\label{In1}\\
b^{\ast}(u,v,w)\leq C \Vert \nabla u\Vert\Vert\nabla v\Vert\left(\Vert\nabla
w \Vert\Vert w\Vert\right)^{1/2}, \quad \forall\, u, v, w \in X ,\label{In2}\\
b^{\ast}(u,v,w)\leq C (\Vert \nabla u\Vert\Vert  u\Vert)^{1/2}\Vert\nabla v\Vert\Vert\nabla
w \Vert, \quad \forall\, u, v, w \in X .\label{In3}
\end{gather}

The full discretization of the proposed partitioned ensemble algorithm is
\begin{algorithm}\label{Algo}
Sub-problem 1: Given $u_{j,h}^n \in X_h$ and $\phi_{j,h}^n \in S_h$, find $u_{j,h}^{n+1}\in X_h$ and $p_{j,h}^{n+1}\in Q_h$ satisfying
\begin{equation}\label{1}
\left\{\begin{aligned}
&\frac{1}{N}\left(\frac{u_{j,h}^{n+1}-u_{j,h}^n}{\Delta t}, v_h\right)+\frac{1}{N}b^{\ast}(\bar{u}_h^n, u_{j,h}^{n+1}, v_h)+\frac{1}{N}b^{\ast}(u_{j,h}^n-\bar{u}_h^n, u_{j,h}^n,v_h) \\
&+\frac{1}{M^2}(\nabla u_{j,h}^{n+1}, \nabla v_h)-(p_{j,h}^{n+1}, \nabla \cdot v_h)+\left(-\nabla \phi^n_{j,h}+u_{j,h}^{n+1}\times B, v_h\times B\right)\\
&=\left( f^{n+1}_j, v_h\right), \qquad\forall v_h\in X_h, \\
&\left( \nabla \cdot u_{j,h}^{n+1}, q_h\right)=0, \qquad \forall q_h\in Q_h.
\end{aligned}\right. 
\end{equation}

Sub-problem 2: Given $u_{j,h}^{n} \in X_h$, find $\phi_{j,h}^{n+1}\in S_h$ satisfying
\begin{equation}\label{2}
\left(-\nabla \phi_{j,h}^{n+1}+u_{j,h}^n\times B, \nabla \psi_h\right) =0, \qquad  \forall \psi_h\in S_h.
\end{equation}

\end{algorithm}
\section{Stability of the method}
In this section, we prove Algorithm \eqref{Algo} is long time, nonlinearly stable under a CFL like time step condition.
\begin{theorem}
[Stability]\label{th:stability} Consider the method with a standard
spacial discretization with mesh size $h$. Suppose the following time step conditions hold
\begin{align}
C \frac{M^2}{N}\frac{\Delta t}{h}
\Vert \nabla (u_{j,h}^{ n}-\bar{u}_h^n)\Vert^2\leq 1,  \qquad j= 1, ..., J,\label{ineq:CFL} 
\end{align}
then, for
any
$n\geq 1$
\begin{align}\label{qq7}
&\frac{1}{N}\Vert u_{j,h}^{n}\Vert^{2}+\sum_{k=0}^{n-1}\frac{1}{2N}\Vert u_{j,h}^{k+1}-u_{j,h}^{k}\Vert^2+\sum_{k=0}^{n-1}\frac{\Delta t}{2M^2}\Vert \nabla u_{j,h}^{k+1}\Vert^2+\frac{\Delta t}{2M^2} \Vert\nabla u_{j,h}^{n}\Vert^2 \\
&+ \Delta t\Vert B\times u_{j,h}^{n}\Vert^2
+\Delta t\sum_{k=0}^{n-1}\left(\Vert - \phi^k_{j,h} +u_{j,h}^{k+1}\times B\Vert^2+\Vert - \phi^{k+1}_{j,h} +u_{j,h}^{k}\times B\Vert^2\right)\nonumber\\
& +\Delta t \Vert \nabla \phi_{j,h}^{n} \Vert^2\leq \frac{1}{N}\Vert u_{j,h}^{0}\Vert^{2}+\frac{\Delta t}{2M^2} \Vert\nabla u_{j,h}^{0}\Vert^2+ \Delta t\Vert B\times u_{j,h}^{0}\Vert^2 \nonumber\\
&+\Delta t \Vert \nabla \phi_{j,h}^{0} \Vert^2+\Delta t\sum_{k=0}^{n-1}M^2\Vert f_j^{k+1}\Vert_{-1}^2\text{ .}\nonumber
\end{align}

\end{theorem}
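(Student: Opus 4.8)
The plan is a discrete energy estimate carried out one realization at a time. Fix the index $j$, test the momentum sub-problem \eqref{1} with $v_h = 2\Delta t\,u_{j,h}^{n+1}$ and the potential sub-problem \eqref{2} with $\psi_h = 2\Delta t\,\phi_{j,h}^{n+1}$, add the two, rewrite the Lorentz coupling terms with elementary polarization identities so that the magnetic and potential quantities telescope, control the one non-energy term (the explicit convection term) with the CFL condition \eqref{ineq:CFL}, then sum on $n$ and telescope. Note that no Gronwall inequality is needed: after the cancellations nothing of the form $\|u_{j,h}^k\|$ or $\|\nabla u_{j,h}^k\|$ with $k\le n-1$ survives on the right-hand side, which is precisely why the bound is uniform in the final time.

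First, test \eqref{1} with $v_h = 2\Delta t\,u_{j,h}^{n+1}$. Since $u_{j,h}^{n+1}\in V_h$ the pressure term drops; $b^{\ast}(\bar{u}_h^n, u_{j,h}^{n+1}, u_{j,h}^{n+1}) = 0$ by skew symmetry; the identity $2(a-b,a) = \|a\|^2 - \|b\|^2 + \|a-b\|^2$ turns the difference quotient into $\frac1N\bigl(\|u_{j,h}^{n+1}\|^2 - \|u_{j,h}^n\|^2 + \|u_{j,h}^{n+1}-u_{j,h}^n\|^2\bigr)$; and the coupling term, handled with $2(a+c,c) = \|a+c\|^2 + \|c\|^2 - \|a\|^2$ for $a = -\nabla\phi_{j,h}^n$, $c = u_{j,h}^{n+1}\times B$, contributes $\Delta t\|-\nabla\phi_{j,h}^n + u_{j,h}^{n+1}\times B\|^2 + \Delta t\|u_{j,h}^{n+1}\times B\|^2 - \Delta t\|\nabla\phi_{j,h}^n\|^2$. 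Next, test \eqref{2} --- the update producing $\phi_{j,h}^{n+1}$ from $u_{j,h}^n$ --- with $\psi_h = 2\Delta t\,\phi_{j,h}^{n+1}$; using $2(x,y) = \|x\|^2 + \|y\|^2 - \|x-y\|^2$ this yields the vanishing relation $\Delta t\|\nabla\phi_{j,h}^{n+1}\|^2 - \Delta t\|u_{j,h}^n\times B\|^2 + \Delta t\|-\nabla\phi_{j,h}^{n+1} + u_{j,h}^n\times B\|^2 = 0$. Adding it to the momentum relation, the magnetic and potential contributions on the left become exactly $\bigl(\Delta t\|u_{j,h}^{n+1}\times B\|^2 - \Delta t\|u_{j,h}^n\times B\|^2\bigr) + \bigl(\Delta t\|\nabla\phi_{j,h}^{n+1}\|^2 - \Delta t\|\nabla\phi_{j,h}^n\|^2\bigr) + \Delta t\|-\nabla\phi_{j,h}^n + u_{j,h}^{n+1}\times B\|^2 + \Delta t\|-\nabla\phi_{j,h}^{n+1} + u_{j,h}^n\times B\|^2$, i.e. two telescoping chains plus the two nonnegative squares that appear on the left of \eqref{qq7}.

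The main obstacle is the one term that is not yet of energy type, the explicit convection term $\frac{2\Delta t}{N}b^{\ast}(u_{j,h}^n - \bar{u}_h^n, u_{j,h}^n, u_{j,h}^{n+1})$. Using skew symmetry once more ($b^{\ast}(\cdot, u_{j,h}^n, u_{j,h}^n) = 0$) I would replace $u_{j,h}^{n+1}$ by $u_{j,h}^{n+1} - u_{j,h}^n$, bound the result with \eqref{In2}, and apply the inverse inequality \eqref{inverse} to $\|\nabla(u_{j,h}^{n+1} - u_{j,h}^n)\|$ --- this produces a factor $h^{-1/2}$, which is the source of the $\Delta t/h$ scaling in \eqref{ineq:CFL}. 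Young's inequality then splits it into one piece absorbed by $\frac1{2N}\|u_{j,h}^{n+1}-u_{j,h}^n\|^2$ (leaving the $\frac1{2N}\|u_{j,h}^{n+1}-u_{j,h}^n\|^2$ of \eqref{qq7}) and one piece of the form $\bigl(C\frac{M^2}{N}\frac{\Delta t}{h}\|\nabla(u_{j,h}^n - \bar{u}_h^n)\|^2\bigr)\frac{\Delta t}{2M^2}\|\nabla u_{j,h}^n\|^2$, whose bracket is $\le 1$ under \eqref{ineq:CFL}, so this piece is dominated by a re-indexed copy of the viscous term $\frac{2\Delta t}{M^2}\|\nabla u_{j,h}^{n+1}\|^2$. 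The body force is bounded by $2\Delta t(f_j^{n+1}, u_{j,h}^{n+1}) \le \frac{\Delta t}{M^2}\|\nabla u_{j,h}^{n+1}\|^2 + \Delta t M^2\|f_j^{n+1}\|_{-1}^2$ using the definition of $\|\cdot\|_{-1}$.

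Finally, sum the resulting per-step inequality over $k = 0,\dots,n-1$ and telescope the chains $\frac1N\|u_{j,h}^k\|^2$, $\Delta t\|B\times u_{j,h}^k\|^2$, $\Delta t\|\nabla\phi_{j,h}^k\|^2$, and $\frac{\Delta t}{M^2}\|\nabla u_{j,h}^k\|^2$ (the last after moving the body-force and convection residuals to the left); retaining the two square terms and $\sum_k\frac{\Delta t}{2M^2}\|\nabla u_{j,h}^{k+1}\|^2$, discarding the surplus nonnegative terms, and sending the initial-data pieces to the right gives \eqref{qq7}. I expect the only fussy bookkeeping to be the viscous term, which must simultaneously feed Young's inequality for the body force, absorb the convection residual one step later, and still telescope to leave both $\sum_k\frac{\Delta t}{2M^2}\|\nabla u_{j,h}^{k+1}\|^2$ and $\frac{\Delta t}{2M^2}\|\nabla u_{j,h}^n\|^2$; coordinating the generic constant in \eqref{ineq:CFL} with the weights in Young's inequality makes this close.
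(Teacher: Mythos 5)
Your proposal is correct and follows essentially the same route as the paper: test the two sub-problems with the new-time-level unknowns, combine the Lorentz coupling terms via the polarization identities (the paper's equality \eqref{qqq2}), bound the explicit convection term using skew-symmetry, \eqref{In2}, the inverse inequality and Young so that the CFL condition \eqref{ineq:CFL} lets a fraction of the viscous term absorb it, bound the forcing with the dual norm, and sum/telescope without Gronwall. The only differences are cosmetic (testing with $2\Delta t\,u_{j,h}^{n+1}$ instead of multiplying the final inequality by $2$, and absorbing the convection residual into a split of the viscous term indexed the same way the paper does in \eqref{qq4}).
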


\begin{proof}
Set $v_h=u_{j,h}^{n+1}$ in (\ref{1}) and multiply through by $\Delta t$. This gives
\begin{align}\label{qq1}
&\frac{1}{2N}\left(\Vert u_{j,h}^{n+1}\Vert^{2}-\Vert u_{j,h}^{n}\Vert^{2}+\Vert u_{j,h}^{n+1}-u_{j,h}^{n}\Vert^2\right)
+ \frac{\Delta t}{N} b^*(  u_{j,h}^n-\bar{u}_h^n, u_{j,h}^n,u_{j,h}^{n+1})\\
&+\frac{\Delta t}{M^2} \Vert\nabla u_{j,h}^{n+1}\Vert^2+ \Delta t\Vert B\times u_{j,h}^{n+1}\Vert^2
-\Delta t\left(\nabla \phi_{j,h}^n,u_{j,h}^{n+1}\times B\right)= \Delta t(f_j^{n+1}, u_{j,h}^{n+1})\text{ .}\nonumber
\end{align}

\noindent Set $\psi_h=\phi_{j,h}^{n+1}$ in (\ref{2}) and multiply through by $\Delta t$. This gives
\begin{gather}\label{qq2}
\Delta t\Vert \nabla \phi_{j,h}^{n+1} \Vert^2=\Delta t ( \nabla \phi_{j,h}^{n+1}, u_{j,h}^n\times B).
\end{gather}

\noindent The following equality will be used in the next step.

\begin{align}\label{qqq2}
& \Vert B\times u_{j,h}^{n+1}\Vert^2+\Vert \nabla \phi_{j,h}^{n+1} \Vert^2
+\left(-\nabla \phi_{j,h}^n,u_{j,h}^{n+1}\times B\right)+ ( -\nabla \phi_{j,h}^{n+1}, u_{j,h}^n\times B)\nonumber\\
&=\Vert B\times u_{j,h}^{n+1}\Vert^2+\Vert \nabla \phi_{j,h}^{n+1} \Vert^2\nonumber\\
&\qquad+\frac{1}{2}\left(\Vert  -\nabla \phi_{j,h}^{n} + u_{j,h}^{n+1}\times B\Vert ^2 -\Vert \nabla \phi_{j,h}^{n} \Vert^2-\Vert u_{j,h}^{n+1}\times B\Vert^2\right)\\
&\qquad+\frac{1}{2}\left(\Vert  -\nabla \phi_{j,h}^{n+1} + u_{j,h}^{n}\times B\Vert ^2 -\Vert \nabla \phi_{j,h}^{n+1} \Vert^2-\Vert u_{j,h}^{n}\times B\Vert^2\right)\nonumber\\
&=\frac{ 1}{2}\left(\Vert B\times u_{j,h}^{n+1}\Vert^2-\Vert B\times u_{j,h}^{n}\Vert^2\right)+\frac{1}{2}\left(\Vert \nabla \phi_{j,h}^{n+1} \Vert^2- \Vert \nabla \phi_{j,h}^{n} \Vert^2\right)\nonumber\\
&\qquad+\frac{1}{2}\left(\Vert - \phi^n_{j,h} +u_{j,h}^{n+1}\times B\Vert^2+\Vert - \phi^{n+1}_{j,h} +u_{j,h}^{n}\times B\Vert^2\right) .\nonumber
\end{align}
\noindent Adding \eqref{qq1} and \eqref{qq2} and using equality \eqref{qqq2} gives
\begin{align}\label{qq3}
&\frac{1}{2N}\left(\Vert u_{j,h}^{n+1}\Vert^{2}-\Vert u_{j,h}^{n}\Vert^{2}+\Vert u_{j,h}^{n+1}-u_{j,h}^{n}\Vert^2\right)
+ \frac{\Delta t}{N} b^*(  u_{j,h}^n-\bar{u}_h^n, u_{j,h}^n,u_{j,h}^{n+1})\\
&+\frac{\Delta t}{M^2} \Vert\nabla u_{j,h}^{n+1}\Vert^2+\frac{ \Delta t}{2}\left(\Vert B\times u_{j,h}^{n+1}\Vert^2-\Vert B\times u_{j,h}^{n}\Vert^2\right)+\frac{\Delta t}{2}\left(\Vert \nabla \phi_{j,h}^{n+1} \Vert^2- \Vert \nabla \phi_{j,h}^{n} \Vert^2\right)\nonumber\\
&+\frac{\Delta t}{2}\left(\Vert - \phi^n_{j,h} +u_{j,h}^{n+1}\times B\Vert^2+\Vert - \phi^{n+1}_{j,h} +u_{j,h}^{n}\times B\Vert^2\right) =\Delta t(f_j^{n+1}, u_{j,h}^{n+1})\text{ .}\nonumber
\end{align}

\noindent Applying Cauchy-Schwarz and Young's inequality on the right hand side of the equation gives 
\begin{align}\label{qqq3}
&\frac{1}{2N}\left(\Vert u_{j,h}^{n+1}\Vert^{2}-\Vert u_{j,h}^{n}\Vert^{2}+\Vert u_{j,h}^{n+1}-u_{j,h}^{n}\Vert^2\right)+\frac{\Delta t}{M^2} \Vert\nabla u_{j,h}^{n+1}\Vert^2
\\
&+\frac{ \Delta t}{2}\left(\Vert B\times u_{j,h}^{n+1}\Vert^2-\Vert B\times u_{j,h}^{n}\Vert^2\right)+\frac{\Delta t}{2}\left(\Vert \nabla \phi_{j,h}^{n+1} \Vert^2- \Vert \nabla \phi_{j,h}^{n} \Vert^2\right)\nonumber\\
&+\frac{\Delta t}{2}\left(\Vert - \phi^n_{j,h} +u_{j,h}^{n+1}\times B\Vert^2+\Vert - \phi^{n+1}_{j,h} +u_{j,h}^{n}\times B\Vert^2\right)\nonumber\\
& \leq - \frac{\Delta t}{N} b(  u_{j,h}^n-\bar{u}_h^n, u_{j,h}^n,u_{j,h}^{n+1})+\frac{\Delta t}{2M^2}\Vert \nabla u_{j,h}^{n+1}\Vert^2+\frac{M^2\Delta t}{2}\Vert f_j^{n+1}\Vert_{-1}^2\text{ .}\nonumber
\end{align}

\noindent Next, we bound the trilinear terms using \eqref{In2}, \eqref{inverse} and Young's inequality.
\begin{align}
&- \frac{\Delta t}{N} b^{*}( u_{j,h}^{ n}-\bar{u}_h^n,  u_{j,h}^{n},u_{j,h}^{n+1})\\
&=-\frac{\Delta t}{N} b^{*}( u_{j,h}^{ n}-\bar{u}_h^n,  u_{j,h}^{n},u_{j,h}^{n+1}-u_{j,h}^n)\nonumber\\
&\leq C\frac{\Delta t}{N}\Vert\nabla (u_{j,h}^{n}-\bar{u}_h^n)\Vert\Vert\nabla u_{j,h}^{n}\Vert\Vert \nabla (u_{j,h}^{n+1}-u_{j,h}^n)\Vert^{\frac{1}{2}}\Vert u_{j,h}^{n+1}-u_{j,h}^n\Vert^{\frac{1}{2}}\nonumber\\
&\leq C\frac{\Delta t}{N} h^{-\frac{1}{2}
}\Vert\nabla (u_{j,h}^{n}-\bar{u}_h^n)\Vert\Vert\nabla u_{j,h}^{n}\Vert\Vert u_{j,h}^{n+1}-u_{j,h}^n\Vert\nonumber\\
&\leq  C \frac{\Delta t^{2}}{Nh}
\Vert\nabla (u_{j,h}^{n}-\bar{u}_h^n)\Vert^{2} \Vert\nabla u_{j,h}^{n}\Vert^{2}
+
\frac{1}{4N}\Vert u_{j,h}^{n+1}-u_{j,h}^n\Vert^{2}\text{ .}\nonumber
\end{align}

\noindent With this bound, combining like terms, \eqref{qqq3} becomes,

\begin{align}\label{qq4}
&\frac{1}{2N}\left(\Vert u_{j,h}^{n+1}\Vert^{2}-\Vert u_{j,h}^{n}\Vert^{2}\right)+\frac{1}{4N}\Vert u_{j,h}^{n+1}-u_{j,h}^{n}\Vert^2+\frac{\Delta t}{4M^2} \Vert\nabla u_{j,h}^{n+1}\Vert^2\\
&+\frac{\Delta t}{4M^2} \left(\Vert\nabla u_{j,h}^{n+1}\Vert^2-\Vert\nabla u_{j,h}^{n}\Vert^2\right)
+\frac{\Delta t}{4M^2} \left(1-C\frac{M^2}{N}\frac{\Delta t}{h}\Vert \nabla (u_{j,h}^n-\bar{u}_h^n)\Vert^2\right)\Vert\nabla u_{j,h}^{n}\Vert^2 \nonumber\\
&+\frac{ \Delta t}{2}\left(\Vert B\times u_{j,h}^{n+1}\Vert^2-\Vert B\times u_{j,h}^{n}\Vert^2\right) 
+\frac{\Delta t}{2}\left(\Vert \nabla \phi_{j,h}^{n+1} \Vert^2- \Vert \nabla \phi_{j,h}^{n} \Vert^2\right)\nonumber\\
&+\frac{\Delta t}{2}\left(\Vert - \phi^n_{j,h} +u_{j,h}^{n+1}\times B\Vert^2+\Vert - \phi^{n+1}_{j,h} +u_{j,h}^{n}\times B\Vert^2\right)\nonumber\\
& \leq \frac{M^2\Delta t}{2}\Vert f_j^{n+1}\Vert_{-1}^2\text{ .}\nonumber
\end{align}

\noindent With the time step restriction (\ref{ineq:CFL}) assumed, we have

\begin{align}
1-C\frac{M^2}{N}\frac{\Delta t}{h}\Vert \nabla (u_{j,h}^n-\bar{u}_h^n)\Vert^2\geq 0.\nonumber
\end{align}

\noindent Inequality (\ref{qq4}) then reduces to

\begin{align}\label{qq5}
&\frac{1}{2N}\left(\Vert u_{j,h}^{n+1}\Vert^{2}-\Vert u_{j,h}^{n}\Vert^{2}\right)+\frac{1}{4N}\Vert u_{j,h}^{n+1}-u_{j,h}^{n}\Vert^2+\frac{\Delta t}{4M^2} \Vert\nabla u_{j,h}^{n+1}\Vert^2\\
&+\frac{\Delta t}{4M^2} \left(\Vert\nabla u_{j,h}^{n+1}\Vert^2-\Vert\nabla u_{j,h}^{n}\Vert^2\right)
 +\frac{ \Delta t}{2}\left(\Vert B\times u_{j,h}^{n+1}\Vert^2-\Vert B\times u_{j,h}^{n}\Vert^2\right) \nonumber\\
 &+\frac{\Delta t}{2}\left(\Vert - \phi^n_{j,h} +u_{j,h}^{n+1}\times B\Vert^2+\Vert - \phi^{n+1}_{j,h} +u_{j,h}^{n}\times B\Vert^2\right)\nonumber\\
 &+\frac{\Delta t}{2}\left(\Vert \nabla \phi_{j,h}^{n+1} \Vert^2- \Vert \nabla \phi_{j,h}^{n} \Vert^2\right)
\leq \frac{M^2\Delta t}{2}\Vert f_j^{n+1}\Vert_{-1}^2\text{ .}\nonumber
\end{align}

\noindent Summing up \eqref{qq5} and multiplying through by 2 gives

\begin{align}\label{qq6}
&\frac{1}{N}\Vert u_{j,h}^{n}\Vert^{2}+\sum_{k=0}^{n-1}\frac{1}{2N}\Vert u_{j,h}^{k+1}-u_{j,h}^{k}\Vert^2+\sum_{k=0}^{n-1}\frac{\Delta t}{2M^2}\Vert \nabla u_{j,h}^{k+1}\Vert^2+\frac{\Delta t}{2M^2} \Vert\nabla u_{j,h}^{n}\Vert^2 \\
&+ \Delta t\Vert B\times u_{j,h}^{n}\Vert^2
+\Delta t\sum_{k=0}^{n-1}\left(\Vert - \phi^k_{j,h} +u_{j,h}^{k+1}\times B\Vert^2+\Vert - \phi^{k+1}_{j,h} +u_{j,h}^{k}\times B\Vert^2\right)\nonumber\\
& +\Delta t \Vert \nabla \phi_{j,h}^{n} \Vert^2\leq \frac{1}{N}\Vert u_{j,h}^{0}\Vert^{2}+\frac{\Delta t}{2M^2} \Vert\nabla u_{j,h}^{0}\Vert^2+ \Delta t\Vert B\times u_{j,h}^{0}\Vert^2 \nonumber\\
&+\Delta t \Vert \nabla \phi_{j,h}^{0} \Vert^2+\Delta t\sum_{k=0}^{n-1}M^2\Vert f_j^{k+1}\Vert_{-1}^2\text{ .}\nonumber
\end{align}
\end{proof}

\section{Error Analysis}\label{err_analysis}
{\allowdisplaybreaks
In this section, we give a detailed error analysis of the proposed method under the same type of time-step condition (with possibly different constant $C$ in the condition). Assuming that
$X_{h}$ and $Q_{h}$ satisfy the $LBB^{h}$ condition, Sub-problem 1 in Algorithm \eqref{Algo}
is equivalent to: Given $u_{j,h}^{n}\in V_{h}$ and $\phi_{j,h}^n \in S_h$, for $n=0,1,\ldots, \tilde{N}-1$, find $u_{j,h}%
^{n+1}\in V_{h}$ such that 
\begin{align}\label{eq: conv}
&\frac{1}{N}\left(\frac{u_{j,h}^{n+1}-u_{j,h}^n}{\Delta t}, v_h\right)+\frac{1}{N}b^{\ast}(\bar{u}_h^n, u_{j,h}^{n+1}, v_h)+\frac{1}{N}b^{\ast}(u_{j,h}^n-\bar{u}_h^n, u_{j,h}^n,v_h) \\
&+\frac{1}{M^2}(\nabla u_{j,h}^{n+1}, \nabla v_h)+\left(-\nabla \phi^n_{j,h}+u_{j,h}^{n+1}\times B, v_h\times B\right)=\left( f^{n+1}_j, v_h\right)\quad\forall
v_{h}\in V_{h}.\nonumber
\end{align}

We define the discrete norms as
\begin{align*}
\vertiii{v}_{\infty,k}=\max\limits_{0\leq n\leq \tilde{N}}\Vert v^{n}\Vert_{k} \qquad \text{and}\qquad
\vertiii{v}_{m,k}:= \Big(\sum_{n=0}^{\tilde{N}}||v^{n}||_{k}^{m}\Delta t\Big)^{1/m},
\end{align*}
where $v^n=v(t_n)$ and $t_n=n\Delta t$.

To analyze the rate of convergence of the approximation, we assume that the following regularity for the exact solutions:
\begin{gather*}
u_{j} \in L^{\infty}(0,T;H^{k+1}(\Omega))\cap H^{1}(0,T;H^{k+1}(\Omega))\cap
H^{2}(0,T;L^{2}(\Omega)),\\
p_{j} \in L^{2}(0,T;H^{s+1}(\Omega)), \quad \text{and}\quad f_{j} \in L^{2}
(0,T;L^{2}(\Omega)),\\
\phi_{j} \in L^{\infty}(0,T;H^{m+1}(\Omega))\cap H^{1}(0,T;H^{1}(\Omega)).
\end{gather*}
Let $e_{u,j}^{n}=u_{j}^{n}-u_{j,h}^{n}$ and $e_{\phi,j}^{n}=\phi_{j}^{n}-\phi_{j,h}^{n}$ denote the approximation error of the $j$-th simulation at the time instance $t_n$. We then have the following error estimates.

\begin{theorem}[Convergence of Algorithm \ref{Algo}]\label{th:errBEFE-Ensemble} 
For all $j= 1, \ldots, J$, if the following time step conditions hold
\begin{align}
&C \frac{M^2}{N}\frac{\Delta t}{h}
\Vert \nabla (u_{j,h}^{ n}-\bar{u}_h^n)\Vert^2\leq 1,  \qquad j= 1, ..., J,\label{conv1}\\
&\Delta t
<\left(\frac{CM^6}{N^3 }+16N \Vert B\Vert_{L^{\infty}}^2\right)^{-1},\label{conv2}
\end{align}
then,  there exists a positive constant $C$ independent of the time step such that 
\begin{align}\label{ineq:err00}
&\Vert e_{u,j}^{n}\Vert^{2}+\frac{1}{2}\sum_{l=0}^{n-1}\|e_{u,j}^{l+1}-e_{u,j}^{l}\|^{2}+\Delta t\sum_{l=0}^{n-1}
\frac{N}{2M^2}\Vert\nabla e_{u,j}^{l+1}\Vert^{2}\\
&\qquad +N\Delta t\Vert e_{u,j}^{n}\times B\Vert^2+N\Delta t\Vert \nabla e_{\phi, j}^{n}\Vert^2 
+N\Delta t\sum_{l=0}^{n-1}\Vert -\nabla e_{\phi, j}^{l}+e_{u,j}^{l+1}\times B \Vert^2\nonumber\\
&\qquad+N\Delta t\sum_{l=0}^{n-1}\Vert -\nabla e_{\phi, j}^{l+1}+e_{u,j}^{l}\times B \Vert^2+N\Delta t\sum_{l=0}^{n-1}\Vert \nabla e_{\phi, j}^{l+1}\Vert^2 
+\frac{7N}{12M^2}\Delta t
\Vert\nabla e_{u, j}^{n}\Vert^{2}\nonumber\\
&\leq e^{\frac{T\tilde{C}}{1-\Delta t \tilde{C}}}
\bigg\{\Delta t\left(\frac{CM^6}{N^3 }+13N \Vert B\Vert_{L^{\infty}}^2\right)\| e^{0}_{u,j} \|^{2}+  \Vert e
_{u, j}^{0}\Vert^{2}+N\Delta t\Vert \nabla e_{\phi, j}^{0}\Vert^2\nonumber\\
&+\frac{7N}{12M^2}\Delta t
\Vert\nabla e_{u,j}^{0}\Vert^{2}+C\left(\frac{CM^6}{N^3 }+12N \Vert B\Vert_{L^{\infty}}^2\right)h^{2k+2}\Delta t\| u^{0}_{j} \|^{2}_{k+1}+  Ch^{2k+2}\Vert u
_{j}^{0}\Vert^{2}_{k+1}\nonumber\\
&+CN\Vert B\Vert^2_{\infty}h^{2k+2}\Delta t\Vert u_{j}^{0}\Vert_{k+1}^2+CNh^{m}\Delta t\Vert \phi_{j}^{0}\Vert_{m+1}^2+C\frac{N}{2M^2}h^{2k}\Delta t
\Vert u_{j}^{0}\Vert_{k+1}^{2}\nonumber\\
&+C\frac{N}{M^2} h^{2k}\vertiii{ u_j }^2_{2,k+1}+C h^{2k+1}\Delta t^{-1}\vertiii{ u_j }^2_{2,k+1}+ Ch\Delta t\vertiii{ \nabla u_{j,t}}^2_{2,0}
\nonumber\\
&+C\frac{M^2}{N} h^{2k}\vertiii{ u_j }^2_{2,k+1}+C\frac{M^2}{N} h^{2k}\vertiii{ u_j}^4_{4, k+1}+C\frac{M^3}{N} h^{2k}+C\frac{M^2}{N}\Delta t^2\vertiii{ \nabla u_{j,t}}^2_{2,0}\nonumber\\
&+CM^2N h^{2s+2}\vertiii{p_{j}}_{2,s+1}^{2}+C\frac{M^2}{N}h^{2k+2}\vertiii{u_{j,t}}_{2,k+1}^{2} +C\frac{M^2}{N}\Delta t^{2}\vertiii{u_{j,tt}}_{2,0}^{2}\nonumber\\
& +C N h^{2m}\vertiii{ \phi_j }^2_{2,m+1}+CN\Vert B\Vert_{L^{\infty}}^2 h^{2k+2}\vertiii{ u_j }^2_{2,k+1}+ CN\Delta t^2\vertiii{ \nabla \phi_{j,t}}^2_{2,0}\nonumber\\
&+C Nh^{2m+2}\vertiii{ \phi_j }^2_{2,m+1}+CN\Vert B\Vert_{L^{\infty}}^2\Delta t^2\vertiii{ u_{j,t}}^2_{2,0}
\bigg\}+Ch^{2k+2}\vertiii{u_j}^{2}_{\infty, k+1}\nonumber\\
&+Ch^{2k+2}\Delta t \vertiii{u_{j,t}}_{2, k+1}+C
\frac{N}{M^2}h^{2k}\vertiii{u_{j}}_{2, k+1}+CN\Vert B\Vert_{L^{\infty}}^2 h^{2k+2}\Delta t\vertiii{ u_{j}}_{\infty, k+1}\nonumber\\
&+CNh^{2m}\Delta t\vertiii{  \phi_{j}}_{\infty, m+1} +CNh^{2m} \vertiii{\phi_j}_{2,m+1}+CN\Vert B\Vert_{L^{\infty}}^2h^{2k+2}\vertiii{u_j}_{2, k+1}\nonumber\\
&+CNh^{2m}\vertiii{ \phi_{j}}_{2, m+1} 
+C\frac{N}{M}h^{2k}\Delta t
\vertiii{ u_{j}}_{\infty, k+1}.\nonumber
\end{align}
\end{theorem}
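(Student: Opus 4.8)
The plan is to follow the standard finite element error-analysis template, adapted to the partitioned ensemble structure and closely mirroring the energy argument in the proof of Theorem \ref{th:stability}. First I would write the weak form of \eqref{eq:MHD-ensemble} at $t=t_{n+1}$, tested against $v_h\in V_h$ and $\psi_h\in S_h$; replace $u_{j,t}^{n+1}$ by $(u_j^{n+1}-u_j^n)/\Delta t$ plus a consistency remainder, and rewrite the continuous convection term $b^{\ast}(u_j^{n+1},u_j^{n+1},v_h)$ in the ensemble-split form $b^{\ast}(\bar{u}_h^n,u_{j,h}^{n+1},v_h)+b^{\ast}(u_{j,h}^n-\bar{u}_h^n,u_{j,h}^n,v_h)$ plus a bundle of consistency terms (differences between $t_{n+1}$ and $t_n$ data, between continuous and discrete ensemble means, and between continuous and discrete $j$-th solutions). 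Since $v_h\in V_h$ is only discretely divergence free, the pressure contributes $(p_j^{n+1}-q_h,\nabla\cdot v_h)$ for arbitrary $q_h\in Q_h$. Subtracting \eqref{eq: conv} and \eqref{2} then produces error equations for $e_{u,j}^{n+1}$ and $e_{\phi,j}^{n+1}$; note that in the potential equation $u_j^{n+1}-u_{j,h}^n=(u_j^{n+1}-u_j^n)+e_{u,j}^n$ already shows where the $\Delta t$-consistency terms in $\nabla u_{j,t}$ originate.

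Next I would introduce interpolants $\tilde u_j^n\in V_h$ of $u_j^n$ (a Stokes or elliptic projection) and $\tilde\phi_j^n\in S_h$ of $\phi_j^n$, split $e_{u,j}^n=\eta_u^n-\xi_{u,h}^n$ and $e_{\phi,j}^n=\eta_\phi^n-\xi_{\phi,h}^n$ with $\xi_{u,h}^n\in V_h$, $\xi_{\phi,h}^n\in S_h$, and test the velocity error equation with $v_h=\xi_{u,h}^{n+1}$ and the potential error equation with $\psi_h=\xi_{\phi,h}^{n+1}$. The crucial point is to reproduce at the discrete error level the telescoping algebraic identity \eqref{qqq2} for the coupling terms $(-\nabla\xi_{\phi,h}^n+\xi_{u,h}^{n+1}\times B,\xi_{u,h}^{n+1}\times B)$ and $(-\nabla\xi_{\phi,h}^{n+1},\xi_{u,h}^n\times B)$, so that they turn into nonnegative ``$\|-\nabla\xi_{\phi,h}^{\,\cdot}+\xi_{u,h}^{\,\cdot}\times B\|^2$'' terms plus a discrete energy $\tfrac12\Delta t(\|B\times\xi_{u,h}^{n+1}\|^2-\|B\times\xi_{u,h}^n\|^2)+\tfrac12\Delta t(\|\nabla\xi_{\phi,h}^{n+1}\|^2-\|\nabla\xi_{\phi,h}^n\|^2)$ that telescopes on summation. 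For the convection terms I would use the skew-symmetry bounds \eqref{In1}--\eqref{In3}: all contributions except the explicit fluctuation term $-\tfrac{\Delta t}{N}b^{\ast}(u_{j,h}^n-\bar{u}_h^n,\,\cdot\,,\xi_{u,h}^{n+1})=-\tfrac{\Delta t}{N}b^{\ast}(u_{j,h}^n-\bar{u}_h^n,\,\cdot\,,\xi_{u,h}^{n+1}-\xi_{u,h}^n)$ can be absorbed into $\tfrac{N}{M^2}\|\nabla\xi_{u,h}^{l+1}\|^2$, into $\|\xi_{u,h}^{l+1}-\xi_{u,h}^l\|^2$, and into powers of $h$; for that one explicit term I would apply \eqref{In2} together with the inverse inequality \eqref{inverse} exactly as in \eqref{qq4}, which is precisely what forces the CFL condition \eqref{conv1} and leaves the factor $1-C\tfrac{M^2}{N}\tfrac{\Delta t}{h}\|\nabla(u_{j,h}^n-\bar{u}_h^n)\|^2\ge 0$ multiplying $\|\nabla\xi_{u,h}^n\|^2$.

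The remaining work is bounding the right-hand side: the interpolation errors $\eta_u,\eta_\phi$ and their time derivatives give the $h^{2k}$, $h^{2k+1}$, $h^{2k+2}$, $h^{2m}$, $h^{2m+2}$, $h^{2s+2}$ contributions via \eqref{Interp1}--\eqref{interp5} and Cauchy--Schwarz/Young, while the time-consistency remainders give the $\Delta t^2$ contributions via Taylor expansion with integral remainder (e.g.\ $\|u_j^{n+1}-u_j^n-\Delta t\,u_{j,t}^{n+1}\|\le C\Delta t^{3/2}\|u_{j,tt}\|_{L^2(t_n,t_{n+1};L^2(\Omega))}$, and an $O(\Delta t)$-type bound for $u_j^{n+1}-u_j^n$); the factors $M$, $N$, $\|B\|_{L^\infty}$ are tracked through Young's inequality with weights chosen to match the dissipation $\tfrac{N}{M^2}\|\nabla\xi_{u,h}^{l+1}\|^2$, $N\|\,\cdot\,\|^2$. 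After summing over $n$ and collecting the terms proportional to $\|\xi_{u,h}^l\|^2$ (hence to $\|e_{u,j}^l\|^2$ up to interpolation error) on the right in the form $\Delta t\sum\kappa_l A_l$, I would invoke the discrete Gronwall Lemma \ref{lm:Gronwall}; the hypothesis $\Delta t\kappa_l\le 1$ reduces to exactly $\Delta t<(\tfrac{CM^6}{N^3}+16N\|B\|_{L^\infty}^2)^{-1}$, which is \eqref{conv2}, and the resulting exponential factor is $e^{T\tilde C/(1-\Delta t\tilde C)}$. Passing back from $\xi_{u,h},\xi_{\phi,h}$ to $e_{u,j},e_{\phi,j}$ by the triangle inequality and the approximation estimates yields \eqref{ineq:err00}. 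The main obstacle is not any single deep step but the bookkeeping: making the telescoping coupling identity survive at the discrete error level, routing the explicit ensemble term through the inverse inequality so that precisely the CFL quantity $\|\nabla(u_{j,h}^n-\bar{u}_h^n)\|^2$ appears, and choosing Young weights so that exactly the stated combination of powers of $h$, $\Delta t$, $M$, $N$, $\|B\|_{L^\infty}$ emerges and the Gronwall constant collapses to \eqref{conv2}.
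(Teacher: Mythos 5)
Your plan follows essentially the same route as the paper's proof: subtract the scheme from the consistency-perturbed true-solution equations, split each error into an interpolation part plus a discrete part in $V_h$/$S_h$, test with the discrete parts, reuse the coupling identity \eqref{qqq2} to telescope the $B$-coupling terms, treat the explicit ensemble fluctuation term via skew-symmetry, \eqref{In2} and the inverse inequality to produce the CFL condition \eqref{conv1}, bound the remaining nonlinear, pressure and consistency terms with \eqref{In1}--\eqref{In3}, the stability bound and Young's inequality, then sum, apply the discrete Gronwall Lemma \ref{lm:Gronwall} (yielding \eqref{conv2} and the factor $e^{T\tilde C/(1-\Delta t\tilde C)}$), and finish with the triangle inequality and the interpolation estimates. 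This matches the paper's argument in both structure and all key steps, so no further comparison is needed.
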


In particular, if Taylor-Hood elements ($k=2$, $s=1$) are used, i.e., the $C^{0}$ piecewise-quadratic velocity space $X_{h}$ and the $C^{0}$ piecewise-linear pressure space
$Q_{h}$, and  $P_2$ element ($m=2$) is used for $S_h$, we then have the following estimate.
\begin{corollary}
Assume that $\Vert e_{u,j}^{0}\Vert$, $\Vert\nabla e_{u,j}^0\Vert$ and , $\Vert\nabla e_{\phi,j}^0\Vert$ are all $O(h^2)$ accurate or better. Then, if $(X_{h},Q_{h}, S_h)$ is chosen as the $(P_2, P_1, P_2)$ elements, we have
\begin{align}
&\Vert e_{u,j}^{n}\Vert^{2}+\Delta t\sum_{l=0}^{n-1}
\frac{N}{2M^2}\Vert\nabla e_{u,j}^{l+1}\Vert^{2}
+N\Delta t\Vert e_{u,j}^{n}\times B\Vert^2+N\Delta t\Vert \nabla e_{\phi, j}^{n}\Vert^2 \\
&+N\Delta t\sum_{l=0}^{n-1}\Vert -\nabla e_{\phi, j}^{l}+e_{u,j}^{l+1}\times B \Vert^2+N\Delta t\sum_{l=0}^{n-1}\Vert -\nabla e_{\phi, j}^{l+1}+e_{u,j}^{l}\times B \Vert^2\nonumber\\
&+N\Delta t\sum_{l=0}^{n-1}\Vert \nabla e_{\phi, j}^{l+1}\Vert^2 
+\frac{7N}{12M^2}\Delta t
\Vert\nabla e_{u, j}^{n}\Vert^{2} \leq C (h^4+\frac{h^5}{\Delta t} + \Delta t^2 +h \Delta t)\text{ .}
\end{align}

\end{corollary}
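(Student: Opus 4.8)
The Corollary is immediate from Theorem~\ref{th:errBEFE-Ensemble}: set $k=2$, $s=1$, $m=2$, use the assumed $O(h^{2})$ bounds on $\Vert e_{u,j}^{0}\Vert$, $\Vert\nabla e_{u,j}^{0}\Vert$, $\Vert\nabla e_{\phi,j}^{0}\Vert$, note that \eqref{conv2} keeps $e^{T\tilde{C}/(1-\Delta t\tilde{C})}$ bounded independently of $\Delta t$, and collect the dominant powers $h^{4}$, $h^{5}\Delta t^{-1}$, $\Delta t^{2}$, $h\Delta t$, absorbing all $M,N,\Vert B\Vert$ factors and regularity norms into $C$. So the work is Theorem~\ref{th:errBEFE-Ensemble}, which I would prove as follows. \textbf{Error equations and splitting.} Evaluate the weak form of \eqref{eq:MHD-ensemble} at $t_{n+1}$, rewrite $u_{j,t}(t_{n+1})$ as $(u_{j}^{n+1}-u_{j}^{n})/\Delta t$ plus a truncation term of size $\Delta t\,\vertiii{u_{j,tt}}_{2,0}$, and rewrite $b^{\ast}(u_{j}^{n+1},u_{j}^{n+1},v_h)$ as $b^{\ast}(\bar{u}^{n},u_{j}^{n+1},v_h)+b^{\ast}(u_{j}^{n}-\bar{u}^{n},u_{j}^{n},v_h)$ plus nonlinear consistency terms that are $O(\Delta t)$ in the increments $u_{j}^{n+1}-u_{j}^{n}$, $\bar{u}^{n+1}-\bar{u}^{n}$; do the same for the potential equation, which leaves a lag term $(u_{j}^{n+1}-u_{j}^{n})\times B$. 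Subtract \eqref{eq: conv} and \eqref{2} to get equations for $e_{u,j}^{n+1}$ (tested against $v_h\in V_h$, so the pressure enters only through $(p_{j}^{n+1}-q_h,\nabla\cdot v_h)$ for arbitrary $q_h\in Q_h$) and $e_{\phi,j}^{n+1}$ (tested against $\psi_h\in S_h$). Split $e_{u,j}^{n}=\eta_{u,j}^{n}-\xi_{u,j,h}^{n}$, $e_{\phi,j}^{n}=\eta_{\phi,j}^{n}-\xi_{\phi,j,h}^{n}$ with $\eta$ the interpolation errors controlled by \eqref{Interp1}--\eqref{interp5} and $\xi_{\cdot,h}^{n}$ in the discrete space; test with $v_h=\xi_{u,j,h}^{n+1}$ and $\psi_h=\xi_{\phi,j,h}^{n+1}$, multiply by $\Delta t$, add, use skew-symmetry ($b^{\ast}(\cdot,\xi_{u,j,h},\xi_{u,j,h})=0$), and apply the telescoping identity for the coupling terms exactly as in \eqref{qqq2}, so that the cross terms $(-\nabla\xi_{\phi,j,h}^{n},\xi_{u,j,h}^{n+1}\times B)$ and $(-\nabla\xi_{\phi,j,h}^{n+1},\xi_{u,j,h}^{n}\times B)$ collapse into telescoping differences plus the nonnegative quantities $\Vert-\nabla\xi_{\phi,j,h}^{n}+\xi_{u,j,h}^{n+1}\times B\Vert^{2}+\Vert-\nabla\xi_{\phi,j,h}^{n+1}+\xi_{u,j,h}^{n}\times B\Vert^{2}$.

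\textbf{Linear right-hand side terms.} The time-truncation and lag terms, together with the nonlinear consistency terms, are dispatched by Cauchy--Schwarz and Young's inequality, yielding the $\Delta t^{2}\vertiii{u_{j,tt}}_{2,0}^{2}$, $\Delta t^{2}\vertiii{\nabla u_{j,t}}_{2,0}^{2}$, $\Delta t^{2}\vertiii{\nabla\phi_{j,t}}_{2,0}^{2}$ and $\Delta t^{2}\vertiii{u_{j,t}}_{2,0}^{2}$ contributions, plus small absorbable multiples of $\Vert\nabla\xi_{u,j,h}^{n+1}\Vert^{2}$. The interpolation errors enter through $(\eta_{u,j}^{n+1}-\eta_{u,j}^{n})/\Delta t$, through $(\nabla\eta_{u,j}^{n+1},\nabla\xi_{u,j,h}^{n+1})$, through the pressure best-approximation $(p_{j}^{n+1}-q_h,\nabla\cdot\xi_{u,j,h}^{n+1})$, and through $\nabla\eta_{\phi,j}$ and $\eta_{u,j}\times B$; these produce the $h^{2k}$, $h^{2k+2}$, $h^{2s+2}$, $h^{2m}$, $h^{2m+2}$ terms, and—when the inverse inequality \eqref{inverse} is invoked on an interpolation error appearing inside a trilinear form—the mildly suboptimal $h^{2k+1}\Delta t^{-1}$ term.

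\textbf{Nonlinear right-hand side terms (the crux).} After the splitting and skew-symmetry, the surviving trilinear contributions are $b^{\ast}(\bar{u}_{h}^{n},\eta_{u,j}^{n+1},\xi_{u,j,h}^{n+1})$, $b^{\ast}(\bar{e}_{u}^{n},u_{j}^{n+1},\xi_{u,j,h}^{n+1})$, $b^{\ast}(e_{u,j}^{n}-\bar{e}_{u}^{n},u_{j}^{n},\xi_{u,j,h}^{n+1})$, and $b^{\ast}(u_{j,h}^{n}-\bar{u}_{h}^{n},\,\eta_{u,j}^{n}-\xi_{u,j,h}^{n},\,\xi_{u,j,h}^{n+1})$. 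In the last one I would write $\xi_{u,j,h}^{n+1}=\xi_{u,j,h}^{n}+(\xi_{u,j,h}^{n+1}-\xi_{u,j,h}^{n})$, so that $b^{\ast}(u_{j,h}^{n}-\bar{u}_{h}^{n},-\xi_{u,j,h}^{n},\xi_{u,j,h}^{n})=0$ and the only remaining piece needing the inverse inequality is $b^{\ast}(u_{j,h}^{n}-\bar{u}_{h}^{n},-\xi_{u,j,h}^{n},\xi_{u,j,h}^{n+1}-\xi_{u,j,h}^{n})$, bounded via \eqref{In2} and \eqref{inverse} exactly as in the stability proof, so that its coefficient on $\Vert\nabla\xi_{u,j,h}^{n}\Vert^{2}$ is nonnegative precisely under \eqref{conv1}. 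The remaining trilinear pieces are bounded by \eqref{In1}/\eqref{In3} using the a priori bound on $\Vert\nabla u_{j}^{n}\Vert$ and the stability bound on $\Vert\nabla u_{j,h}^{n}\Vert$ from Theorem~\ref{th:stability}; repeated Young's inequalities (pulling out $\Vert\nabla\xi_{u,j,h}^{n+1}\Vert^{2}$ for absorption) produce the $\frac{M^{2}}{N}$- and $\frac{M^{6}}{N^{3}}$-weighted terms and the quartic $\vertiii{u_{j}}_{4,k+1}^{4}$ term. The ensemble-mean error is handled through $\Vert\nabla\bar{e}_{u}^{n}\Vert^{2}\le\frac{1}{J}\sum_{j'=1}^{J}\Vert\nabla e_{u,j'}^{n}\Vert^{2}$, again splitting each summand into its $\eta$ (interpolation) and $\xi$ (kept for Gronwall) parts, so the right-hand side of the $j$-th estimate stays entirely in dissipation and Gronwall quantities.

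\textbf{Gronwall and conclusion.} Absorb the accumulated small multiples of $\Vert\nabla\xi_{u,j,h}^{n+1}\Vert^{2}$ into the dissipation term, sum over the time index, and identify the terms of the form $\kappa_{n}\Vert\xi_{u,j,h}^{n}\Vert^{2}$ (with $\kappa_{n}$ built from the $\frac{M^{6}}{N^{3}}$ and $N\Vert B\Vert_{L^{\infty}}^{2}$ constants); the time-step condition \eqref{conv2} guarantees $\Delta t\,\kappa_{n}\le1$, so Lemma~\ref{lm:Gronwall} applies and supplies the factor $e^{T\tilde{C}/(1-\Delta t\tilde{C})}$. Then use $\Vert e_{u,j}^{n}\Vert\le\Vert\eta_{u,j}^{n}\Vert+\Vert\xi_{u,j,h}^{n}\Vert$ (and analogously for $\nabla e_{\phi,j}^{n}$ and $e_{u,j}^{n}\times B$) and add back the interpolation contributions of $\eta$ not passed through Gronwall to reach \eqref{ineq:err00}. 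The main obstacle is exactly the nonlinear bookkeeping above: one must split the explicitly-treated convection term $b^{\ast}(u_{j,h}^{n}-\bar{u}_{h}^{n},u_{j,h}^{n},\cdot)$ so that the single use of the inverse inequality yields a factor matching the dissipation and controlled by \eqref{conv1} (mirroring Theorem~\ref{th:stability}), while keeping $\bar{e}_{u}^{n}$ from coupling the $J$ estimates uncontrollably; the coupling identity, the interpolation accounting, and tracking the $M$, $N$, $\Vert B\Vert$ powers are routine but lengthy.
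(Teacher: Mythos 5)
Your reduction of the Corollary to Theorem~\ref{th:errBEFE-Ensemble} (set $k=2$, $s=1$, $m=2$, use the $O(h^2)$ initial-error assumption, absorb the Gronwall exponential and all $M,N,\Vert B\Vert_{L^\infty}$ factors and regularity norms into $C$, and keep the dominant powers $h^4$, $h^5\Delta t^{-1}$, $\Delta t^2$, $h\Delta t$) is exactly how the paper obtains it, and your sketch of the theorem itself has the same architecture as the paper's proof: interpolation/discrete splitting, testing with the discrete errors, the coupling identity \eqref{qqq2}, the skew-symmetry plus inverse-inequality treatment of the explicitly lagged convection term under \eqref{conv1}, Gronwall under \eqref{conv2}, and the final triangle inequality with the interpolation estimates. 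The one genuine difference is the bookkeeping of the nonlinear consistency: you linearize the true solution around the ensemble mean $\bar u^n$, so the surviving trilinear terms contain the mean error $\bar e_u^n$ and $e_{u,j}^n-\bar e_u^n$, which you control by $\Vert\nabla\bar e_u^n\Vert^2\le\frac1J\sum_{j'}\Vert\nabla e_{u,j'}^n\Vert^2$; note that the resulting $\frac{1}{J}\sum_{j'}\Vert\nabla U_{j',h}^n\Vert^2$ contributions cannot be absorbed into the $j$-th dissipation alone, so strictly you must first sum the estimates over $j$ (or run a vector Gronwall) and then recover the individual bound from nonnegativity — workable, but an extra step your sketch glosses over. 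The paper instead rewrites the scheme's nonlinearity as $b^{\ast}(u_{j,h}^n,u_{j,h}^{n+1},\cdot)-b^{\ast}(u_{j,h}^n-\bar u_h^n,u_{j,h}^{n+1}-u_{j,h}^n,\cdot)$ and compares with $b^{\ast}(u_j^{n+1},u_j^{n+1},\cdot)$ through $e_{u,j}^n$, $e_{u,j}^{n+1}$ and $u_j^{n+1}-u_j^n$ (see \eqref{ineq:err3} and \eqref{inver}), so no mean error ever appears and the whole estimate stays per-realization, with the known field $u_{j,h}^n-\bar u_h^n$ controlled directly by \eqref{conv1}; this also explains a small misattribution in your sketch: the suboptimal $h^{2k+1}\Delta t^{-1}$ term comes from using \eqref{conv1} to bound $\Vert\nabla(u_{j,h}^n-\bar u_h^n)\Vert^2\le CNh/(M^2\Delta t)$ against $\Vert\nabla\eta_j\Vert^2$, rather than from an inverse inequality applied to the interpolation error. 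Both routes deliver the stated rates; the paper's buys a cleaner, uncoupled per-$j$ Gronwall, yours is closer to the generic ensemble-consistency viewpoint but pays with the cross-realization coupling.
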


\begin{proof}
The true solution $(u_{j}, p_j, \phi_j)$ of the reduced MHD system \eqref{eq:MHD-ensemble} satisfies
\begin{align}\label{eq:convtrue1}
\frac{1}{N}\Big(&\frac{u_{j}^{n+1}-u_{j}^{n}}{\Delta t}, v_{h}\Big)+ \frac{1}{N} b^{*}(u_{j}^{n+1},
u_{j}^{n+1}, v_{h})+ \frac{1}{M^2}(\nabla u_{j}^{n+1}, \nabla v_{h})\\
&- (p_{j}%
^{n+1},\nabla\cdot v_{h})+(u_j^{n+1}\times B, v_h\times B_j)-(\nabla \phi_j^n, v_h\times B)\nonumber\\
&=(f_{j}^{n+1}, v_{h})+\left(\nabla (\phi_j^{n+1}-\phi_j^n), v_h\times B\right)
+ \text{Intp}(u_{j}^{n+1};v_{h})\text{ , }\quad\forall v_{h}\in V_{h},\nonumber
\end{align}
\noindent and
\begin{align}\label{eq:convtrue2}
&\left(-\nabla \phi_j^{n+1}+u_j^n\times B, \nabla \psi_h\right) =-\left((u_j^{n+1}- u_j^n)\times B, \nabla \psi_h\right), \quad  \forall \psi_h\in S_h,
\end{align}
\noindent where 
$\text{Intp}(u_{j}^{n+1};v_{h})= \frac{1}{N}\big(\frac{u_{j}^{n+1}-u_{j}^{n}}{\Delta t}-u_{j,t}%
(t^{n+1}),v_{h}\big)$. 

Let
\begin{align}
&e_{u,j}^{n}=u_{j}^{n}-u_{j,h}^{n}=(u_{j}^{n}-I_{h} u_{j}^{n})+(I_{h} u_{j}%
^{n}-u_{j,h}^{n})=\eta_{j}^{n}+U_{j,h}^{n} ,\\
&e_{\phi,j}^{n}=\phi_{j}^{n}-\phi_{j,h}^{n}=(\phi_{j}^{n}-I_{h} \phi_{j}^{n})+(I_{h} \phi_{j}
^{n}-\phi_{j,h}^{n})=\xi_{j}^{n}+\Phi_{j,h}^{n} ,
\end{align}

\noindent where $I_{h} u_{j}^{n} \in V_{h} $ is an interpolant of $u_{j}^{n}$
in $V_{h}$, and $I_{h} \phi_{j}^{n} \in S_{h} $ is an interpolant of $\phi_{j}^{n}$
in $S_{h}.$

Subtracting (\ref{eq: conv}) from (\ref{eq:convtrue1}) gives

\begin{align}
&\frac{1}{N}\Big(\frac{U_{j,h}^{n+1}-U_{j,h}^{n}}{ \Delta t},v_{h}\Big) +\frac{1}{M^2}(\nabla U
_{j,h}^{n+1},\nabla v_{h})+\frac{1}{N}
b^{*}(u_{j}^{n+1},u_{j}^{n+1},v_{h})\\
&-\frac{1}{N} b^{*}(\overline{u}_{h}^{n},u_{j,h}^{n+1},v_{h}) 
-\frac{1}{N} b^{*}(u_{j,h}^{n}-\overline{u}_{h}^n,u_{j,h}
^{n},v_{h})+(U_{j,h}^{n+1}\times B, v_h\times B)\nonumber\\
&-(\nabla \Phi_{j,h}^n, v_h\times B)-(p_{j}^{n+1},\nabla\cdot v_{h})\nonumber\\
&=-\frac{1}{N}(\frac{\eta_{j}^{n+1}-\eta_{j}^{n}}{\Delta t},v_{h}) -\frac{1}{M^2}(\nabla\eta
_{j}^{n+1},\nabla v_{h})-(\eta_j^{n+1}\times B, v_h\times B)\nonumber\\
&+(\nabla \xi_j^n, v_h\times B)
+\left(\nabla (\phi_j^{n+1}-\phi_j^n), v_h\times B\right)+\text{Intp}(u_j^{n+1};v_{h}),\nonumber
\end{align}

\noindent and subtracting \eqref{2} from \eqref{eq:convtrue2} yields
\begin{align}\label{qqqq}
&\left(-\nabla \Phi_j^{n+1}, \nabla \psi_h\right)+\left( U_j^n\times B, \nabla \psi_h\right)\\
 & \qquad=\left(\nabla \xi_j^{n+1}, \nabla \psi_h\right)-\left( \eta_j^n\times B, \nabla \psi_h\right)-\left((u_j^{n+1}- u_j^n)\times B, \nabla \psi_h\right) .\nonumber
\end{align}

\noindent Setting $v_{h}=U_{j,h}^{n+1}\in V_{h}$ and $\psi_{h}=\Phi_{j,h}^{n+1}\in S_{h}$, rearranging the nonlinear
terms and multiply \eqref{qqqq} by $2$, we have
\begin{equation}
\label{eq:err1}
\begin{split}
&\frac{1}{N\Delta t}\left(\frac{1}{2}||U_{j,h}^{n+1}||^{2}-\frac{1}{2}||U
_{j,h}^{n}||^{2}+\frac{1}{2}\|U_{j,h}^{n+1}-U_{j,h}^{n}\|^{2}\right)+\frac{1}{M^2}
||\nabla U_{j,h}^{n+1}||^{2}\\
&\qquad +\Vert U_{j,h}^{n+1}\times B\Vert^2-(\nabla \Phi_{j,h}^n, U_{j,h}^{n+1}\times B)=-\frac{1}{M^2}
(\nabla\eta_{j}^{n+1},\nabla U_{j,h}^{n+1})\\
&\quad-\frac{1}{N}
b^{*}(u_{j,h}^n-\overline{u}_h^n,u_{j,h}^{n+1}-u_{j,h}^{n},U_{j,h}^{n+1})-\frac{1}{N} b^{*}(u_{j}^{n+1},u_{j}^{n+1},U_{j,h}^{n+1})\\
&\quad+\frac{1}{N} b^{*}(u_{j,h}^{n}%
,u_{j,h}^{n+1},U_{j,h}^{n+1})+(p_{j}
^{n+1},\nabla\cdot U_{j,h}^{n+1})
-(\eta_j^{n+1}\times B, U_{j,h}^{n+1}\times B)\\
&\quad+(\nabla \xi_j^n, U_{j,h}^{n+1}\times B)
+\left(\nabla (\phi_j^{n+1}-\phi_j^n), U_{j,h}^{n+1}\times B\right)\\
&\quad-\frac{1}{N}(\frac{\eta_{j}^{n+1}-\eta_{j}^{n}}{ \Delta t}, U_{j,h}^{n+1}) 
+\text{Intp}(u_j^{n+1}; U_{j,h}^{n+1}),
\end{split}
\end{equation}

\noindent and 
\begin{align}\label{eq:err11111}
& 2\Vert \nabla \Phi_{j,h}^{n+1}\Vert^2 -2\left( U_j^n\times B, \nabla \Phi_{j,h}^{n+1}\right)=-2\left(\nabla \xi_j^{n+1}, \nabla \Phi_{j,h}^{n+1}\right)\\
 & \qquad+2\left( \eta_j^n\times B, \nabla \Phi_{j,h}^{n+1}\right)+2\left((u_j^{n+1}- u_j^n)\times B, \nabla \Phi_{j,h}^{n+1}\right) .\nonumber
\end{align}

\noindent Adding \eqref{eq:err1} and \eqref{eq:err11111} and using equality \eqref{qqq2} gives

\begin{equation}
\label{eq:err111}
\begin{split}
&\frac{1}{N\Delta t}\left(\frac{1}{2}||U_{j,h}^{n+1}||^{2}-\frac{1}{2}||U
_{j,h}^{n}||^{2}+\frac{1}{2}\|U_{j,h}^{n+1}-U_{j,h}^{n}\|^{2}\right)+\frac{1}{M^2}
||\nabla U_{j,h}^{n+1}||^{2}\\
&\quad +\frac{1}{2}\left(\Vert U_{j,h}^{n+1}\times B\Vert^2-\Vert U_{j,h}^{n}\times B\Vert^2\right)+\frac{1}{2}\left(\Vert \nabla \Phi_{j,h}^{n+1}\Vert^2-\Vert \nabla \Phi_{j,h}^{n}\Vert^2 \right)\\
& \quad+\frac{1}{2}\left(\Vert -\nabla \Phi_{j,h}^n+U_{j,h}^{n+1}\times B \Vert^2+\Vert -\nabla \Phi_{j,h}^{n+1}+U_{j,h}^{n}\times B \Vert^2\right)+\Vert \nabla \Phi_{j,h}^{n+1}\Vert^2 \\
&=-\frac{1}{M^2}
(\nabla\eta_{j}^{n+1},\nabla U_{j,h}^{n+1})-\frac{1}{N} b^{*}(u_{j,h}^n-\overline{u}_h^n,u_{j,h}^{n+1}-u_{j,h}^{n},\xi_{j,h}^{n+1})\\
&\quad-\frac{1}{N} b^{*}(u_{j}^{n+1},u_{j}^{n+1},\xi_{j,h}^{n+1})+\frac{1}{N} b^{*}(u_{j,h}^{n}%
,u_{j,h}^{n+1},\xi_{j,h}^{n+1})+(p_{j}
^{n+1},\nabla\cdot U_{j,h}^{n+1})\\
&\quad -(-\nabla \xi_j^n+\eta_j^{n+1}\times B, U_{j,h}^{n+1}\times B)+(U_{j,h}^n\times B, \nabla \Phi_{j,h}^{n+1}
)\\
&\quad+\left(\nabla (\phi_j^{n+1}-\phi_j^n), U_{j,h}^{n+1}\times B\right)-2\left(\nabla \xi_j^{n+1}, \nabla \Phi_{j,h}^{n+1}\right)\\
&\quad +2\left( \eta_j^n\times B, \nabla \Phi_{j,h}^{n+1}\right)+2\left((u_j^{n+1}- u_j^n)\times B, \nabla \Phi_{j,h}^{n+1}\right)\\
&\quad-(\frac{\eta_{j}^{n+1}-\eta_{j}^{n}}{ \Delta t}, U_{j,h}^{n+1}) 
+\text{Intp}(u_j^{n+1}; U_{j,h}^{n+1}).
\end{split}
\end{equation}

\noindent We bound the terms on the right hand side of \eqref{eq:err1} as follows.

\begin{equation}
\begin{aligned}
-\frac{1}{M^2}(\nabla\eta_{j}^{n+1},\nabla U_{j,h}^{n+1}) &\leq \frac{1}{M^2}\|\nabla\eta_{j}
^{n+1}\| \|\nabla U_{j,h}^{n+1}\| \\
&\leq \frac{1}{4C_0M^2}\|\nabla\eta_{j}^{n+1}\|^{2}+ \frac{C_0}{M^2}\|\nabla U_{j,h}
^{n+1}\|^{2} .
\end{aligned}
\end{equation}

\noindent Next we analyze the nonlinear terms in \eqref{eq:err1} one by one. 
For the first nonlinear term, we have 
\begin{equation}\label{ineq:err3}
\begin{aligned}
&- \frac{1}{N} b^*(u_{j,h}^n-\overline{u}_h^n, u_{j,h}^{n+1}-u_{j,h}^n, U_{j,h}^{n+1})\\
=&\frac{1}{N} b^*(u_{j,h}^n-\overline{u}_h^n, e_{u,j}^{n+1}-e_{u,j}^n, U_{j,h}^{n+1}) -\frac{1}{N} b^*(u_{j,h}^n-\overline{u}_h^n, u_{j}^{n+1}-u_{j}^n, U_{j,h}^{n+1})\\
=&\frac{1}{N} b^*(u_{j,h}^n-\overline{u}_h^n, \eta_{j}^{n+1}, U_{j,h}^{n+1})-\frac{1}{N} b^*(u_{j,h}^n-\overline{u}_h^n, \eta_{j}^{n}, U_{j,h}^{n+1})\\
&\quad-\frac{1}{N} b^*(u_{j,h}^n-\overline{u}_h^n, U_{j,h}^{n}, U_{j,h}^{n+1})-\frac{1}{N} b^*(u_{j,h}^n-\overline{u}_h^n, u_{j}^{n+1}-u_{j}^n, U_{j,h}^{n+1})\,.
\end{aligned}
\end{equation}

\noindent Using inequality \eqref{In1} and Young's inequality, we have the following estimates.

\begin{equation}
\begin{aligned}
\frac{1}{N} b^*(u_{j,h}^n-\overline{u}_h^n, &\eta_{j}^{n+1}, U_{j,h}^{n+1}) \leq \frac{C}{N}\|\nabla (u_{j,h}^n-\overline{u}_h^n)\|\|\nabla\eta_{j}^{n+1}\|\|\nabla U_{j,h}^{n+1}\|\\
&\leq
\frac{C^2M^2}{4C_0N^2} \|\nabla (u_{j,h}^n-\overline{u}_h^n)\|^{2}\|\nabla\eta_{j}^{n+1}\|^{2}
+\frac{C_0}{M^2}\|\nabla U_{j,h}^{n+1}\|^{2},
\end{aligned}
\end{equation}

\noindent and
\begin{equation}
\begin{aligned}
-\frac{1}{N} b^*(u_{j,h}^n-\overline{u}_h^n, \eta_{j}^{n}&, U_{j,h}^{n+1}) \leq \frac{C}{N}\|\nabla (u_{j,h}^n-\overline{u}_h^n)\|\|\nabla\eta_{j}^{n}\|\|\nabla U_{j,h}^{n+1}\|\\
&\leq
\frac{C^2M^2}{4C_0N^2} \|\nabla (u_{j,h}^n-\overline{u}_h^n)\|^{2}\|\nabla\eta_{j}^{n}\|^{2}
+\frac{ C_0}{M^2}\|\nabla U_{j,h}^{n+1}\|^{2}.
\end{aligned}
\end{equation}
Because $b^*(\cdot,\cdot,\cdot)$ is skew-symmetric, we have
\begin{align}
-b^{\ast}(u_{j,h}^n-\overline{u}_h^n,U_{j,h}^{n},U_{j,h}^{n+1})&=b^{\ast}(u_{j,h}^n-\overline{u}_h^n,U_{j,h}^{n+1},U_{j,h}^{n})\\
&=b^{\ast}(u_{j,h}^n-\overline{u}_h^n,U
_{j,h}^{n+1}-U_{j,h}^{n},U_{j,h}^{n}) \nonumber\\
&=-b^{\ast}(u_{j,h}^n-\overline{u}_h^n, U_{j,h}^{n},U_{j,h}^{n+1}-U_{j,h}^n)\,.\nonumber
\end{align}
Then, by inequality \eqref{In2}, we obtain
\begin{equation}\label{inver}
\begin{aligned}
 -\frac{1}{N}&b^{\ast}(u_{j,h}^n-\overline{u}_h^n,U_{j,h}^{n},U_{j,h}^{n+1})  \\
\leq &\frac{C}{N} \Vert \nabla (u_{j,h}^n-\overline{u}_h^n)\Vert\Vert \nabla U_{j,h}^{n}\Vert
\Vert \nabla (U_{j,h}^{n+1}-U_{j,h}^n)\Vert^{1/2}\Vert  U_{j,h}^{n+1}-U_{j,h}^n\Vert^{1/2}\\
\leq & \frac{C}{N}\Vert \nabla (u_{j,h}^n-\overline{u}_h^n)\Vert\Vert \nabla U_{j,h}^{n}\Vert (h)^{-1/2}\Vert  U_{j,h}^{n+1}-U_{j,h}^n\Vert\\
\leq & \frac{1}{4N\triangle t}\Vert U_{j,h}^{n+1}- U_{j,h}^{n}\Vert^{2}+\left(
C\frac{\triangle t}{Nh}\Vert\nabla (u_{j,h}^n-\overline{u}_h^n)\Vert^{2}\right)  \Vert\nabla
U_{j,h}^{n}\Vert^{2}.
\end{aligned}
\end{equation}
For the last nonlinear term in \eqref{ineq:err3}, we have

\begin{align}
-\frac{1}{N}b^{*}(u_{j,h}^n-&\overline{u}_h^n,u_{j}^{n+1}-u_{j}^{n}, U_{j,h}^{n+1})\\
& \leq \frac{C}{N}\|\nabla
(u_{j,h}^n-\overline{u}_h^n)\|\|\nabla(u_{j}^{n+1}-u_{j}^{n})\|\|\nabla U_{j,h}^{n+1}%
\|\nonumber\\
&\leq 
\frac{C^2M^2}{4C_0N^2}\|\nabla (u_{j,h}^n-\overline{u}_h^n)\|^{2}\|\nabla (u_{j}^{n+1}-u_{j}^{n})\|^{2} 
+\frac{C_0}{M^2}\|\nabla U_{j,h}^{n+1}\|^{2} \nonumber\\
&\leq
\frac{C^2 M^2\Delta t}{4C_0N^2}\|\nabla (u_{j,h}^n-\overline{u}_h^n)\|^{2}\left(\int_{t^{n}}^{t^{n+1}}\| \nabla u_{j,t}\|^{2} \, dt\right) 
+\frac{C_0}{M^2} \|\nabla U_{j,h}^{n+1}\|^{2}
.\nonumber
\end{align}
Next, we bound the last two nonlinear terms on the RHS of \eqref{eq:err1} as follows:
\begin{equation}\label{eq:nonlinear}
\begin{aligned}
-&b^{*}(u_{j}^{n+1}, u_{j}^{n+1},U_{j,h}^{n+1})
+b^{*}(u_{j,h}^{n},u_{j,h}^{n+1},U_{j,h}^{n+1})\\
& = -b^{*}(e_{u,j}^{n},u_{j}^{n+1},U_{j,h}^{n+1})
-b^{*}(u_{j,h}^{n},e_{u,j}^{n+1},U_{j,h}^{n+1})
-b^{*}(u^{n+1}_{j}-u_j^{n}, u_{j}^{n+1}, U_{j,h}^{n+1}) \\
&= -b^{*}(\eta^{n}_j,u_{j}^{n+1},U_{j,h}^{n+1})-b^{*}(U_{j,h}^{n},u_{j}^{n+1},U_{j,h}^{n+1})\\
&\qquad -b^{*}(u^{n}_{j,h}, \eta_{j}^{n+1}, U_{j,h}^{n+1})-b^{*}(u^{n+1}_{j}-u^{n}_j,
u_{j}^{n+1}, U_{j,h}^{n+1}) .\nonumber
\end{aligned}
\end{equation}

\noindent With the assumption $u_j^{n+1}\in L^{\infty}(0,T; H^1(\Omega))$, we have 
\begin{equation}
\begin{aligned}
-\frac{1}{N} b^{*}(\eta_{j}^{n},u_{j}^{n+1},U_{j,h}^{n+1}) 
&\leq 
\frac{C}{N}\|\nabla \eta_{j}^{n}\|\|\nabla u_{j}^{n+1}\|\|\nabla U_{j,h}^{n+1}\|  \\
&\leq
\frac{C^2M^2}{4C_0N^2}\|\nabla\eta_{j}^{n}\|^{2} + \frac{C_0}{M^2}\|\nabla U_{j,h}^{n+1}\|^{2}.
\end{aligned}
\end{equation}
Using the inequality \eqref{In3}, Young's inequality, and $u_j^{n+1}\in L^{\infty}(0,T; H^1(\Omega))$, we get
\begin{align}
-\frac{1}{N} b^{*}(U^{n}_{j,h}, u_{j}^{n+1}, U_{j,h}^{n+1}) &\leq \frac{C}{N}\| \nabla U
^{n}_{j,h} \|^{1/2} \Vert U^{n}_{j,h}\Vert^{1/2}\|\nabla
u_{j}^{n+1}\|\|\nabla U_{j,h}^{n+1}\|\nonumber\\
&\leq \frac{C}{N}\| \nabla U^{n}_{j,h} \|^{1/2} \Vert U^{n}_{j,h}\Vert
^{1/2}\|\nabla U_{j,h}^{n+1}\|\nonumber\\
&\leq 
\frac{C}{N}\Big(\frac{1}{4\alpha}\|\nabla U^{n}_{j,h} \|\| U^{n}_{j,h} \| + \alpha\|\nabla U_{j,h}^{n+1}\|^{2}\Big)\\
&\leq 
\frac{C}{N}\Big(\frac{1}{4\alpha} \big(\frac{\delta}{2}\|\nabla U^{n}_{j,h} \|^{2}+\frac{1}{2\delta}\| U^{n}_{j,h} \|^2 \big)
+ \alpha\|\nabla U_{j,h}^{n+1}\|^{2} \Big)\nonumber\\
&\leq
\frac{C_0}{M^2}\|\nabla  U^{n}_{j,h} \|^{2}+\frac{C^4M^6}{64C_0^3N^4 }\| U^{n}_{j,h} \|^{2} + \frac{C_0}{M^2} \|\nabla U_{j,h}^{n+1}\|^{2}\, ,\nonumber
\end{align}
where we set $\alpha= \frac{C_0N}{CM^2}$ and $\delta= \frac{8C_0^2N^2}{C^2M^4}$. 
By Young's inequality, inequality \eqref{In3}, and the result \eqref{qq7} from the stability analysis, i.e., $\Vert u_{j,h}^n \Vert^2 \leq C$, we also have
\begin{equation}
\begin{aligned}
\frac{1}{N} b^{*}(u^{n}_{j,h}, \eta_{j}^{n+1}, U_{j,h}^{n+1}) 
&\leq 
\frac{C}{N}\|\nabla u^{n}_{j,h}\|^{1/2}\Vert u_{j,h}^n\Vert^{1/2}\|\nabla \eta_{j}^{n+1}\|\|\nabla U_{j,h}^{n+1}\|\\
&\leq
\frac{C^2M^2}{4C_0N^2}\|\nabla u_{j,h}^{n}\|\|\nabla\eta^{n+1}_{j}\|^{2} + \frac{C_0}{M^2}\|\nabla U_{j,h}^{n+1}\|^{2},
\end{aligned}
\end{equation}
and 
\begin{align}
\frac{1}{N}b^{*}(u_{j}^{n+1}-u_{j}^{n},u_{j}^{n+1},&U_{j,h}^{n+1}) 
\leq 
 \frac{C}{N}\|\nabla (u_{j}^{n+1}-u_{j}^{n})\|\|\nabla u_{j}^{n+1}\|\|\nabla U_{j,h}
^{n+1}\|\nonumber\\
\leq & \frac{C^2M^2}{4C_0N^2}\|\nabla(u_{j}^{n+1}-u_{j}^{n})\|^{2} + \frac{C_0}{M^2} \|\nabla U_{j,h}^{n+1}\|^{2} \nonumber\\
= & \frac{C^2M^2\Delta t^2}{4C_0N^2}\left\|\frac{\nabla u_{j}^{n+1}- \nabla u_{j}^{n}}{\Delta t}\right\|^{2} + \frac{C_0}{M^2}\|\nabla U_{j,h}^{n+1}\|^{2} \\
= &\frac{C^2M^2\Delta t^2}{4C_0N^2}
\int_{\Omega}\left(\frac{1}{\Delta t}\int_{t^{n}}^{t^{n+1}} \nabla u_{j,t}
\, dt\right)^{2} d\Omega+ \frac{C_0}{M^2} \|\nabla U_{j,h}^{n+1}\|^{2} \nonumber\\
\leq & \frac{C^2 M^2\Delta t}{4C_0N^2} \int_{t^{n}}^{t^{n+1}}\| \nabla u_{j,t} \|^{2}\, dt
 + \frac{C_0}{M^2}  \|\nabla U_{j,h}^{n+1}\|^{2}. \nonumber
\end{align}
For the pressure term in \eqref{eq:err111}, because $U_{j,h}^{n+1}\in V_{h}$, for $\forall q_{j,h}^{n+1}\in Q_h $ we have

\begin{align}
(p_{j}^{n+1},\nabla\cdot U_{j,h}^{n+1})&=(p_{j}^{n+1}-q_{j,h}^{n+1},
\nabla\cdot U_{j,h}^{n+1})\nonumber\\
&\leq\sqrt{d}\,\|p_{j}^{n+1}-q_{j,h}^{n+1}\|\|\nabla U_{j,h}^{n+1}\|\\
&\leq
\frac{M^2d}{4\, C_0} \|p_{j}^{n+1}-q_{j,h}^{n+1}\|^{2} + \frac{C_0}{M^2}\|\nabla U_{j,h}^{n+1}\|^{2}
 \text{ .}\nonumber
\end{align}
The other terms are bounded as follows.
\begin{align}
-\frac{1}{N}\Big( \frac{\eta_{j}^{n+1}-\eta_{j}^{n}}{ \Delta t},U_{j,h}^{n+1} \Big) 
&\leq
\frac{C}{N} \Big\|\frac{\eta_{j}^{n+1}-\eta_{j}^{n}}{ \Delta t} \Big\| \|\nabla U_{j,h}
^{n+1}\|\nonumber\\
&\leq 
\frac{C^2M^2}{4C_0N^2} \left \|\frac{\eta_{j}^{n+1}-\eta_{j}^{n}}{ \Delta t} \right\|^{2}
+\frac{C_0}{M^2} \|\nabla U_{j,h}^{n+1}\|^{2}\\
&\leq 
\frac{C^2M^2}{4C_0N^2} \left \|\frac{1}{\Delta t}\int_{t^{n}}^{t^{n+1}} \eta_{j,t} \text{ }
dt \right \|^2+\frac{C_0}{M^2}\|\nabla U_{j,h}^{n+1}\|^{2}\nonumber\\
&\leq
\frac{C^2M^2}{4C_0N^2\Delta t}\int_{t^{n}}^{t^{n+1}}\| \eta_{j,t}\|^{2}\text{ }
dt+\frac{C_0}{M^2} \|\nabla U_{j,h}^{n+1}\|^{2}.\nonumber
\end{align}

\begin{align}
\text{Intp}(u_{j}^{n+1}; U_{j,h}^{n+1})
&=\frac{1}{N} \left(\frac{u_{j}^{n+1}-u_{j}^{n}}{\Delta
t}-u_{j,t}(t^{n+1}), U_{j,h}^{n+1}\right)\nonumber\\
&\leq \frac{C}{N}\left\|\frac{u_{j}^{n+1}-u_{j}^{n}}{\Delta t}-u_{j,t}(t^{n+1})\right\| \|\nabla
U_{j,h}^{n+1}\|\nonumber\\
&\leq
\frac{C^2M^2}{4C_0N^2}\left \|\frac{u_{j}^{n+1}-u_{j}^{n}}{\Delta t}-u_{j,t}(t^{n+1}) \right\|^{2}
+ \frac{C_0}{M^2}\|\nabla U_{j,h}^{n+1}\|^{2} \nonumber \\
&\leq
\frac{C^2M^2\Delta t}{4C_0N^2}\int_{t^{n}}^{t^{n+1}}\|u_{j,tt}\|^{2}\, dt 
+ \frac{C_0}{M^2}\|\nabla U_{j,h}^{n+1}\|^{2}
.
\label{lastineq}
\end{align}

\begin{align}
-(-\nabla \xi_j^n+\eta_j^{n+1}\times B,& U_{j,h}^{n+1}\times B)\leq \Vert -\nabla \xi_j^n+\eta_j^{n+1}\times B\Vert \Vert U_{j,h}^{n+1}\times B\Vert\\
&\leq \frac{1}{4}\Vert -\nabla \xi_j^n+\eta_j^{n+1}\times B\Vert^2+\Vert B\Vert_{L^{\infty}}^2\Vert U_{j,h}^{n+1}\Vert^2\nonumber\\
&\leq \frac{1}{2}\Vert \nabla \xi_j^n \Vert^2+\frac{1}{2}\Vert B\Vert_{L^{\infty}}^2\Vert \eta_j^{n+1}\Vert^2+\Vert B\Vert_{L^{\infty}}^2\Vert U_{j,h}^{n+1}\Vert^2.\nonumber
\end{align}

\begin{align}
(U_{j,h}^n\times B, \nabla \Phi_{j,h}^{n+1})&\leq \Vert U_{j,h}^n\times B\Vert \Vert \nabla \Phi_{j,h}^{n+1}\Vert \\
&\leq  \frac{1}{4C_1}\Vert B\Vert_{L^{\infty}}^2\Vert U_{j,h}^{n}\Vert^2 + C_1\Vert \nabla \Phi_{j,h}^{n+1}\Vert^2.\nonumber
\end{align}

\begin{align}
\left(\nabla (\phi_j^{n+1}-\phi_j^n), U_{j,h}^{n+1}\times B\right)&\leq \Vert  \nabla (\phi_j^{n+1}-\phi_j^n)\Vert\Vert  U_{j,h}^{n+1}\times B\Vert\\
&\leq \frac{1}{4}\Vert  \nabla (\phi_j^{n+1}-\phi_j^n)\Vert^2+\Vert B\Vert_{L^{\infty}}^2\Vert U_{j,h}^{n+1}\Vert^2\nonumber\\
&\leq \frac{1}{4}\Vert   \int_{t^n}^{t^{n+1}} \nabla \phi_{j,t} \, dt \Vert^2+\Vert B\Vert_{L^{\infty}}^2\Vert U_{j,h}^{n+1}\Vert^2\nonumber\\
&\leq \frac{1}{4}\Delta t \int_{t^n}^{t^{n+1}}\Vert \nabla \phi_{j,t}\Vert^2 dt+\Vert B\Vert_{L^{\infty}}^2\Vert U_{j,h}^{n+1}\Vert^2.\nonumber
\end{align}

\begin{align}
-2\left(\nabla \xi_j^{n+1}, \nabla \Phi_{j,h}^{n+1}\right)&\leq 2 \Vert \nabla \xi_j^{n+1}\Vert\Vert \nabla \Phi_{j,h}^{n+1}\Vert\\
&\leq \frac{1}{C_1}\Vert \nabla \xi_j^{n+1}\Vert^2+C_1 \Vert \nabla \Phi_{j,h}^{n+1}\Vert^2.\nonumber
\end{align}

\begin{align}
 2\left( \eta_j^n\times B, \nabla \Phi_{j,h}^{n+1}\right)&\leq 2 \Vert \eta_j^n\times B\Vert \Vert \nabla \Phi_{j,h}^{n+1} \Vert\\
 &\leq \frac{1}{C_1} \Vert \eta_j^n\times B\Vert^2+C_1 \Vert  \nabla \Phi_{j,h}^{n+1} \Vert^2\nonumber\\
 &\leq \frac{1}{C_1} \Vert  B\Vert_{L^{\infty}}^2\Vert \eta_j^n\Vert^2+C_1 \Vert  \nabla \Phi_{j,h}^{n+1} \Vert^2.\nonumber
\end{align}

\begin{align}\label{lasteq}
2\left((u_j^{n+1}- u_j^n)\times B, \nabla \Phi_{j,h}^{n+1}\right)&\leq 2 \Vert (u_j^{n+1}- u_j^n)\times B\Vert\Vert \nabla \Phi_{j,h}^{n+1}\Vert\\
&\leq \frac{1}{C_1}\Vert B\Vert^2_{L^{\infty}}\Vert u_j^{n+1}- u_j^n\Vert^2+C_1 \Vert \nabla \Phi_{j,h}^{n+1}\Vert^2\nonumber\\
&\leq \frac{\Delta t}{C_1}\Vert B\Vert^2_{L^{\infty}}\int_{t^n}^{t^{n+1}}\Vert u_{j,t}\Vert^2 dt +C_1 \Vert \nabla \Phi_{j,h}^{n+1}\Vert^2.\nonumber
\end{align}

\noindent Combining \eqref{eq:err1}-\eqref{lasteq}, and taking $C_0=\frac{1}{24}, C_1=\frac{1}{8}$, we have

\begin{align}\label{eq:err2}
&\frac{1}{N\Delta t}\left(\frac{1}{2}||U_{j,h}^{n+1}||^{2}-\frac{1}{2}||U
_{j,h}^{n}||^{2}+\frac{1}{4}\|U_{j,h}^{n+1}-U_{j,h}^{n}\|^{2}\right)+\frac{1}{4M^2}
||\nabla U_{j,h}^{n+1}||^{2}\\
&\qquad +\frac{1}{2}\Vert U_{j,h}^{n+1}\times B\Vert^2-\frac{1}{2}\Vert U_{j,h}^{n}\times B\Vert^2+\frac{1}{2}\Vert \nabla \Phi_{j,h}^{n+1}\Vert^2-\frac{1}{2}\Vert \nabla \Phi_{j,h}^{n}\Vert^2 \nonumber\\
& +\frac{1}{2}\Vert -\nabla \Phi_{j,h}^n+U_{j,h}^{n+1}\times B \Vert^2+\frac{1}{2}\Vert -\nabla \Phi_{j,h}^{n+1}+U_{j,h}^{n}\times B\Vert^2+\frac{1}{2}\Vert \nabla \Phi_{j,h}^{n+1}\Vert^2 \nonumber\\
&+\frac{1}{4M^2}\left(
||\nabla U_{j,h}^{n+1}||^{2}-||\nabla U_{j,h}^{n}||^{2}\right)+\frac{1}{24M^2}\left(
\Vert\nabla U_{j,h}^{n+1}\Vert^{2}-\Vert\nabla U_{j,h}^{n}\Vert^{2}\right)\nonumber\\
&+\frac{1}{4M^2}\left(1- C\frac{M^2}{N}\frac{\triangle t}{h}\Vert\nabla (u_{j,h}^n-\overline{u}_h^n)\Vert^{2}\right)
||\nabla U_{j,h}^{n}||^{2}\nonumber\\
&\leq \frac{1}{4C_0M^2}\|\nabla\eta_{j}^{n+1}\|^{2}+\frac{C^2M^2}{4C_0N^2} \|\nabla (u_{j,h}^n-\overline{u}_h^n)\|^{2}\|\nabla\eta_{j}^{n+1}\|^{2}\nonumber\\
&+\frac{C^2M^2}{4C_0N^2} \|\nabla (u_{j,h}^n-\overline{u}_h^n)\|^{2}\|\nabla\eta_{j}^{n}\|^{2}+\frac{C^2 M^2\Delta t}{4C_0N^2}\|\nabla (u_{j,h}^n-\overline{u}_h^n)\|^{2}\left(\int_{t^{n}}^{t^{n+1}}\| \nabla u_{j,t}\|^{2} \, dt\right) \nonumber\\
&+\frac{C^2M^2}{4C_0N^2}\|\nabla\eta_{j}^{n}\|^{2}+\frac{C_0}{M^2}\|\nabla  U^{n}_{j,h} \|^{2}+\frac{C^4M^6}{64C_0^3N^4 }\| U^{n}_{j,h} \|^{2} +\frac{C^2M^2}{4C_0N^2}\|\nabla u_{j,h}^{n}\|\|\nabla\eta^{n+1}_{j}\|^{2}\nonumber\\
&+\frac{C^2 M^2\Delta t}{4C_0N^2} \int_{t^{n}}^{t^{n+1}}\| \nabla u_{j,t} \|^{2}\, dt+\frac{M^2d}{4\, C_0} \|p_{j}^{n+1}-q_{j,h}^{n+1}\|^{2}+\frac{C^2M^2}{4C_0N^2\Delta t}\int_{t^{n}}^{t^{n+1}}\| \eta_{j,t}\|^{2}\text{ }
dt\nonumber\\
&+\frac{C^2M^2\Delta t}{4C_0N^2}\int_{t^{n}}^{t^{n+1}}\|u_{j,tt}\|^{2}\, dt+\frac{1}{2}\Vert \nabla \xi_j^n \Vert^2+\frac{1}{2}\Vert B\Vert_{L^{\infty}}^2\Vert \eta_j^{n+1}\Vert^2+2\Vert B\Vert_{L^{\infty}}^2\Vert U_{j,h}^{n+1}\Vert^2\nonumber\\
& + \frac{1}{4C_1}\Vert B\Vert_{L^{\infty}}^2\Vert U_{j,h}^{n}\Vert^2 +\frac{1}{4}\Delta t \int_{t^n}^{t^{n+1}}\Vert \nabla \phi_{j,t}\Vert^2 dt+\frac{1}{C_1}\Vert \xi_j^{n+1}\Vert^2\nonumber\\
&+\frac{1}{C_1} \Vert  B\Vert_{L^{\infty}}^2\Vert \eta_j^n\Vert^2+\frac{\Delta t}{C_1}\Vert B\Vert^2_{L^{\infty}}\int_{t^n}^{t^{n+1}}\Vert u_{j,t}\Vert^2 dt .\nonumber
\end{align}

\noindent By the convergence condition \eqref{conv1}, we have 

\begin{align*}
1- C\frac{M^2}{N}\frac{\triangle t}{h}\Vert\nabla (u_{j,h}^n-\overline{u}_h^n)\Vert^{2}\geq 0.
\end{align*}

\noindent Then, after rearranging terms, \eqref{eq:err2} reduces to
\begin{align}\label{eq:err22222}
&\frac{1}{N\Delta t}\left(\frac{1}{2}||U_{j,h}^{n+1}||^{2}-\frac{1}{2}||U
_{j,h}^{n}||^{2}+\frac{1}{4}\|U_{j,h}^{n+1}-U_{j,h}^{n}\|^{2}\right)+\frac{1}{4M^2}
||\nabla U_{j,h}^{n+1}||^{2}\\
&\qquad +\frac{1}{2}\Vert U_{j,h}^{n+1}\times B\Vert^2-\frac{1}{2}\Vert U_{j,h}^{n}\times B\Vert^2+\frac{1}{2}\Vert \nabla \Phi_{j,h}^{n+1}\Vert^2-\frac{1}{2}\Vert \nabla \Phi_{j,h}^{n}\Vert^2 \nonumber\\
& +\frac{1}{2}\Vert -\nabla \Phi_{j,h}^n+U_{j,h}^{n+1}\times B \Vert^2+\frac{1}{2}\Vert -\nabla \Phi_{j,h}^{n+1}+U_{j,h}^{n}\times B\Vert^2 \nonumber\\
&+\frac{1}{2}\Vert \nabla \Phi_{j,h}^{n+1}\Vert^2+\frac{1}{4M^2}\left(
\Vert\nabla U_{j,h}^{n+1}\Vert^{2}-\Vert\nabla U_{j,h}^{n}\Vert^{2}\right)+\frac{1}{24M^2}\left(
\Vert\nabla U_{j,h}^{n+1}\Vert^{2}-\Vert\nabla U_{j,h}^{n}\Vert^{2}\right)\nonumber\\
&\leq \left(\frac{CM^6}{N^4 }+6 \Vert B\Vert_{L^{\infty}}^2\right)\| U^{n}_{j,h} \|^{2}+2\Vert B\Vert_{L^{\infty}}^2\Vert U_{j,h}^{n+1}\Vert^2+\frac{6}{M^2}\|\nabla\eta_{j}^{n+1}\|^{2}\nonumber\\
&+\frac{Ch}{N\Delta t} \|\nabla\eta_{j}^{n+1}\|^{2}+\frac{Ch}{N\Delta t} \|\nabla\eta_{j}^{n}\|^{2}+\frac{Ch}{N}\left(\int_{t^{n}}^{t^{n+1}}\| \nabla u_{j,t}\|^{2} \, dt\right) +\frac{CM^2}{N^2}\|\nabla\eta_{j}^{n}\|^{2}\nonumber\\
& +\frac{CM^2}{N^2}\|\nabla u_{j,h}^{n}\|\|\nabla\eta^{n+1}_{j}\|^{2}+\frac{C M^2\Delta t}{N^2} \int_{t^{n}}^{t^{n+1}}\| \nabla u_{j,t} \|^{2}\, dt+18M^2 \|p_{j}^{n+1}-q_{j,h}^{n+1}\|^{2}\nonumber\\
&+\frac{CM^2}{N^2\Delta t}\int_{t^{n}}^{t^{n+1}}\| \eta_{j,t}\|^{2}\text{ }
dt+\frac{CM^2\Delta t}{N^2}\int_{t^{n}}^{t^{n+1}}\|u_{j,tt}\|^{2}\, dt+\frac{1}{2}\Vert \nabla \xi_j^n \Vert^2\nonumber\\
&  +\frac{1}{2}\Vert B\Vert_{L^{\infty}}^2\Vert \eta_j^{n+1}\Vert^2+\frac{1}{4}\Delta t \int_{t^n}^{t^{n+1}}\Vert \nabla \phi_{j,t}\Vert^2 dt+24\Vert \xi_j^{n+1}\Vert^2\nonumber\\
&+ 24\Vert  B\Vert_{L^{\infty}}^2\Vert \eta_j^n\Vert^2+24\Delta t\Vert B\Vert^2_{L^{\infty}}\int_{t^n}^{t^{n+1}}\Vert u_{j,t}\Vert^2 dt .\nonumber
\end{align}

\noindent Summing (\ref{eq:err22222}) and multiplying both sides by $2N\Delta t$ gives
\begin{align}\label{eq:err3}
&\Vert U_{j,h}^{n}\Vert^{2}+\frac{1}{2}\sum_{l=0}^{n-1}\|U_{j,h}^{l+1}-U_{j,h}^{l}\|^{2}+\Delta t\sum_{l=0}^{n-1}
\frac{N}{2M^2}\Vert\nabla U_{j,h}^{l+1}\Vert^{2}\\
&\qquad +N\Delta t\Vert U_{j,h}^{n}\times B\Vert^2+N\Delta t\Vert \nabla \Phi_{j,h}^{n}\Vert^2 
+N\Delta t\sum_{l=0}^{n-1}\Vert -\nabla \Phi_{j,h}^l+U_{j,h}^{l+1}\times B \Vert^2\nonumber\\
&\qquad+N\Delta t\sum_{l=0}^{n-1}\Vert -\nabla \Phi_{j,h}^{l+1}+U_{j,h}^{l}\times B \Vert^2+N\Delta t\sum_{l=0}^{n-1}\Vert \nabla \Phi_{j,h}^{l+1}\Vert^2 
+\frac{7N}{12M^2}\Delta t
\Vert\nabla U_{j,h}^{n}\Vert^{2}\nonumber\\
&\leq \Delta t\sum_{l=0}^{n-1}\left(\frac{CM^6}{N^3 }+16N \Vert B\Vert_{L^{\infty}}^2\right)\|\nabla  U^{l+1}_{j,h} \|^{2}+\Delta t\left(\frac{CM^6}{N^3 }+12N \Vert B\Vert_{L^{\infty}}^2\right)\| U^{0}_{j,h} \|^{2}\nonumber\\
&+  \Vert U
_{j,h}^{0}\Vert^{2}+N\Delta t\Vert U_{j,h}^{0}\times B\Vert^2+N\Delta t\Vert \nabla \Phi_{j,h}^{0}\Vert^2+\frac{7N}{12M^2}\Delta t
\Vert\nabla U_{j,h}^{0}\Vert^{2}\nonumber\\
&+\Delta t\sum_{l=0}^{n-1}
\bigg\{\frac{12N}{M^2}\|\nabla\eta_{j}^{l+1}\|^{2}+\frac{Ch}{\Delta t} \|\nabla\eta_{j}^{l+1}\|^{2}+\frac{Ch}{\Delta t} \|\nabla\eta_{j}^{l}\|^{2}+Ch\left(\int_{t^{l}}^{t^{l+1}}\| \nabla u_{j,t}\|^{2} \, dt\right) \nonumber\\
&+\frac{CM^2}{N}\|\nabla\eta_{j}^{l}\|^{2} +\frac{CM^2}{N}\|\nabla u_{j,h}^{l}\|\|\nabla\eta^{l+1}_{j}\|^{2}+\frac{C M^2\Delta t}{N} \int_{t^{l}}^{t^{l+1}}\| \nabla u_{j,t} \|^{2}\, dt\nonumber\\
&+36M^2N \|p_{j}^{l+1}-q_{j,h}^{l+1}\|^{2}+\frac{CM^2}{N\Delta t}\int_{t^{l}}^{t^{l+1}}\| \eta_{j,t}\|^{2}\text{ }
dt+\frac{CM^2\Delta t}{N}\int_{t^{l}}^{t^{l+1}}\|u_{j,tt}\|^{2}\, dt\nonumber\\
&+N\Vert \nabla \xi_j^l \Vert^2  +N\Vert B\Vert_{L^{\infty}}^2\Vert \eta_j^{l+1}\Vert^2+\frac{N}{2}\Delta t \int_{t^l}^{t^{l+1}}\Vert \nabla \phi_{j,t}\Vert^2 dt+48N\Vert \xi_j^{l+1}\Vert^2\nonumber\\
&+ 48N\Vert  B\Vert_{L^{\infty}}^2\Vert \eta_j^l\Vert^2+48N\Delta t\Vert B\Vert^2_{L^{\infty}}\int_{t^l}^{t^{l+1}}\Vert u_{j,t}\Vert^2 dt \bigg\}.\nonumber
\end{align}

Using the interpolation inequality \eqref{interp2} and  the result \eqref{qq7} from the stability analysis, i.e., $\Delta t \sum_{l=0}^{n-1}\Vert \nabla u_{j,h}^{l+1} \Vert^2 \leq CM$, we have
\begin{align}
\frac{CM^2}{N}\Delta t\sum_{l=0}^{n-1}\Vert\nabla\eta_{j}^{l+1}\Vert^{2}&\Vert\nabla u_{j,h}^{l}\Vert 
\leq C\frac{M^2}{N} h^{2k}\Delta t \sum_{l=0}^{n-1}\Vert u_j^{l+1}\Vert^2_{k+1}\Vert^{2}\Vert\nabla u_{j,h}^{l}\Vert\label{ineq:err4}\\
\leq & C\frac{M^2}{N} h^{2k}\left( \Delta t \sum_{l=0}^{n-1}\Vert u_{j}^{l+1}\Vert_{k+1}^4+\Delta t \sum_{l=0}^{n-1}\Vert\nabla u_{j,h}^{l+1}\Vert^2 \right)\nonumber\\
\leq & C\frac{M^2}{N} h^{2k}\vertiii{ u_j}^4_{4, k+1}+C\frac{M^3}{N} h^{2k}.\nonumber
\end{align}
Applying the interpolation inequalities \eqref{Interp1}, \eqref{interp2}, and \eqref{interp3} gives
\begin{align}\label{ineq:errlast}
&\Vert U_{j,h}^{n}\Vert^{2}+\frac{1}{2}\sum_{l=0}^{n-1}\|U_{j,h}^{l+1}-U_{j,h}^{l}\|^{2}+\Delta t\sum_{l=0}^{n-1}
\frac{N}{2M^2}\Vert\nabla U_{j,h}^{l+1}\Vert^{2}\\
&\qquad +N\Delta t\Vert U_{j,h}^{n}\times B\Vert^2+N\Delta t\Vert \nabla \Phi_{j,h}^{n}\Vert^2 
+N\Delta t\sum_{l=0}^{n-1}\Vert -\nabla \Phi_{j,h}^l+U_{j,h}^{l+1}\times B \Vert^2\nonumber\\
&\qquad+N\Delta t\sum_{l=0}^{n-1}\Vert -\nabla \Phi_{j,h}^{l+1}+U_{j,h}^{l}\times B \Vert^2+N\Delta t\sum_{l=0}^{n-1}\Vert \nabla \Phi_{j,h}^{l+1}\Vert^2 
+\frac{7N}{12M^2}\Delta t
\Vert\nabla U_{j,h}^{n}\Vert^{2}\nonumber\\
&\leq \Delta t\sum_{l=0}^{n-1}\left(\frac{CM^6}{N^3 }+16N \Vert B\Vert_{L^{\infty}}^2\right)\|\nabla  U^{l+1}_{j,h} \|^{2}+\Delta t\left(\frac{CM^6}{N^3 }+12N \Vert B\Vert_{L^{\infty}}^2\right)\| U^{0}_{j,h} \|^{2}\nonumber\\
&+  \Vert U
_{j,h}^{0}\Vert^{2}+N\Delta t\Vert U_{j,h}^{0}\times B\Vert^2+N\Delta t\Vert \nabla \Phi_{j,h}^{0}\Vert^2+\frac{7N}{12M^2}\Delta t
\Vert\nabla U_{j,h}^{0}\Vert^{2}\nonumber\\
&+C\frac{N}{M^2} h^{2k}\vertiii{ u_j }^2_{2,k+1}+C h^{2k+1}\Delta t^{-1}\vertiii{ u_j }^2_{2,k+1}+ Ch\Delta t\vertiii{ \nabla u_{j,t}}^2_{2,0}
\nonumber\\
&+C\frac{M^2}{N} h^{2k}\vertiii{ u_j }^2_{2,k+1}+C\frac{M^2}{N} h^{2k}\vertiii{ u_j}^4_{4, k+1}+C\frac{M^3}{N} h^{2k}+C\frac{M^2}{N}\Delta t^2\vertiii{ \nabla u_{j,t}}^2_{2,0}\nonumber\\
&+CM^2N h^{2s+2}\vertiii{p_{j}}_{2,s+1}^{2}+C\frac{M^2}{N}h^{2k+2}\vertiii{u_{j,t}}_{2,k+1}^{2} +C\frac{M^2}{N}\Delta t^{2}\vertiii{u_{j,tt}}_{2,0}^{2}\nonumber\\
& +C N h^{2k}\vertiii{ \phi_j }^2_{2,k+1}+CN\Vert B\Vert_{L^{\infty}}^2 h^{2k+2}\vertiii{ u_j }^2_{2,k+1}+ CN\Delta t^2\vertiii{ \nabla \phi_{j,t}}^2_{2,0}\nonumber\\
&+C Nh^{2k+2}\vertiii{ \phi_j }^2_{2,k+1}+CN\Vert B\Vert_{L^{\infty}}^2\Delta t^2\vertiii{ u_{j,t}}^2_{2,0}.\nonumber
\end{align}

\noindent Let $\Delta t$ be sufficiently small, i.e., $\Delta t
<\left(\frac{CM^6}{N^3 }+16N \Vert B\Vert_{L^{\infty}}^2\right)^{-1}$. We can apply the lemma \eqref{lm:Gronwall}, denoting $\tilde{C}=\frac{CM^6}{N^3 }+16N \Vert B\Vert_{L^{\infty}}^2$, and obtain

\begin{align}\label{ineq:errlast1}
&\Vert U_{j,h}^{n}\Vert^{2}+\frac{1}{2}\sum_{l=0}^{n-1}\|U_{j,h}^{l+1}-U_{j,h}^{l}\|^{2}+\Delta t\sum_{l=0}^{n-1}
\frac{N}{2M^2}\Vert\nabla U_{j,h}^{l+1}\Vert^{2}\\
&\qquad +N\Delta t\Vert U_{j,h}^{n}\times B\Vert^2+N\Delta t\Vert \nabla \Phi_{j,h}^{n}\Vert^2 
+N\Delta t\sum_{l=0}^{n-1}\Vert -\nabla \Phi_{j,h}^l+U_{j,h}^{l+1}\times B \Vert^2\nonumber\\
&\qquad+N\Delta t\sum_{l=0}^{n-1}\Vert -\nabla \Phi_{j,h}^{l+1}+U_{j,h}^{l}\times B \Vert^2+N\Delta t\sum_{l=0}^{n-1}\Vert \nabla \Phi_{j,h}^{l+1}\Vert^2 
+\frac{7N}{12M^2}\Delta t
\Vert\nabla U_{j,h}^{n}\Vert^{2}\nonumber\\
&\leq e^{\frac{T\tilde{C}}{1-\Delta t \tilde{C}}}
\bigg\{\Delta t\left(\frac{CM^6}{N^3 }+12N \Vert B\Vert_{L^{\infty}}^2\right)\| U^{0}_{j,h} \|^{2}\nonumber\\
&+  \Vert U
_{j,h}^{0}\Vert^{2}+N\Delta t\Vert U_{j,h}^{0}\times B\Vert^2+N\Delta t\Vert \nabla \Phi_{j,h}^{0}\Vert^2+\frac{7N}{12M^2}\Delta t
\Vert\nabla U_{j,h}^{0}\Vert^{2}\nonumber\\
&+C\frac{N}{M^2} h^{2k}\vertiii{ u_j }^2_{2,k+1}+C h^{2k+1}\Delta t^{-1}\vertiii{ u_j }^2_{2,k+1}+ Ch\Delta t\vertiii{ \nabla u_{j,t}}^2_{2,0}
\nonumber\\
&+C\frac{M^2}{N} h^{2k}\vertiii{ u_j }^2_{2,k+1}+C\frac{M^2}{N} h^{2k}\vertiii{ u_j}^4_{4, k+1}+C\frac{M^3}{N} h^{2k}+C\frac{M^2}{N}\Delta t^2\vertiii{ \nabla u_{j,t}}^2_{2,0}\nonumber\\
&+CM^2N h^{2s+2}\vertiii{p_{j}}_{2,s+1}^{2}+C\frac{M^2}{N}h^{2k+2}\vertiii{u_{j,t}}_{2,k+1}^{2} +C\frac{M^2}{N}\Delta t^{2}\vertiii{u_{j,tt}}_{2,0}^{2}\nonumber\\
& +C N h^{2k}\vertiii{ \phi_j }^2_{2,k+1}+CN\Vert B\Vert_{L^{\infty}}^2 h^{2k+2}\vertiii{ u_j }^2_{2,k+1}+ CN\Delta t^2\vertiii{ \nabla \phi_{j,t}}^2_{2,0}\nonumber\\
&+C Nh^{2k+2}\vertiii{ \phi_j }^2_{2,k+1}+CN\Vert B\Vert_{L^{\infty}}^2\Delta t^2\vertiii{ u_{j,t}}^2_{2,0}
\bigg\}.\nonumber
\end{align}

\noindent We now add the following terms to both sides of \eqref{ineq:errlast1}.

\begin{align}\label{extra}
&\Vert \eta_{j}^{n}\Vert^{2}+\frac{1}{2}\sum_{l=0}^{n-1}\|\eta_{j}^{l+1}-\eta_{j}^{l}\|^{2}+\Delta t\sum_{l=0}^{n-1}
\frac{N}{2M^2}\Vert\nabla \eta_{j}^{l+1}\Vert^{2}\\
&\qquad +N\Delta t\Vert \eta_{j}^{n}\times B\Vert^2+N\Delta t\Vert \nabla \xi_{j}^{n}\Vert^2 
+N\Delta t\sum_{l=0}^{n-1}\Vert -\nabla \xi_{j}^{l}+\eta_{j}^{l+1}\times B \Vert^2\nonumber\\
&\qquad+N\Delta t\sum_{l=0}^{n-1}\Vert -\nabla \xi_{j}^{l+1}+\eta_{j}^{l}\times B \Vert^2+N\Delta t\sum_{l=0}^{n-1}\Vert \nabla \xi_{j}^{l+1}\Vert^2 
+\frac{7N}{12M^2}\Delta t
\Vert\nabla \eta_{j}^{n}\Vert^{2}.\nonumber
\end{align}

\noindent Using the triangle
inequality on the error equation gives

\begin{align}\label{ineq:errlast2}
&\Vert e_{u,j}^{n}\Vert^{2}+\frac{1}{2}\sum_{l=0}^{n-1}\|e_{u,j}^{l+1}-e_{u,j}^{l}\|^{2}+\Delta t\sum_{l=0}^{n-1}
\frac{N}{2M^2}\Vert\nabla e_{u,j}^{l+1}\Vert^{2}\\
&\qquad +N\Delta t\Vert e_{u,j}^{n}\times B\Vert^2+N\Delta t\Vert \nabla e_{\phi, j}^{n}\Vert^2 
+N\Delta t\sum_{l=0}^{n-1}\Vert -\nabla e_{\phi, j}^{l}+e_{u,j}^{l+1}\times B \Vert^2\nonumber\\
&\qquad+N\Delta t\sum_{l=0}^{n-1}\Vert -\nabla e_{\phi, j}^{l+1}+e_{u,j}^{l}\times B \Vert^2+N\Delta t\sum_{l=0}^{n-1}\Vert \nabla e_{\phi, j}^{l+1}\Vert^2 
+\frac{7N}{12M^2}\Delta t
\Vert\nabla e_{u, j}^{n}\Vert^{2}\nonumber\\
&\leq e^{\frac{T\tilde{C}}{1-\Delta t \tilde{C}}}
\bigg\{\Delta t\left(\frac{CM^6}{N^3 }+12N \Vert B\Vert_{L^{\infty}}^2\right)\| e^{0}_{u,j} \|^{2}+  \Vert e
_{u, j}^{0}\Vert^{2}+N\Delta t\Vert e_{u, j}^{0}\times B\Vert^2\nonumber\\
&+N\Delta t\Vert \nabla e_{\phi, j}^{0}\Vert^2+\frac{N}{2M^2}\Delta t
\Vert\nabla e_{u,j}^{0}\Vert^{2}+\Delta t\left(\frac{CM^6}{N^3 }+12N \Vert B\Vert_{L^{\infty}}^2\right)\| \eta^{0}_{j} \|^{2}\nonumber\\
&+  \Vert \eta
_{j}^{0}\Vert^{2}+N\Delta t\Vert \eta_{j}^{0}\times B\Vert^2+N\Delta t\Vert \nabla \xi_{j}^{0}\Vert^2+\frac{7N}{12M^2}\Delta t
\Vert\nabla \eta_{j}^{0}\Vert^{2}\nonumber\\
&+C\frac{N}{M^2} h^{2k}\vertiii{ u_j }^2_{2,k+1}+C h^{2k+1}\Delta t^{-1}\vertiii{ u_j }^2_{2,k+1}+ Ch\Delta t\vertiii{ \nabla u_{j,t}}^2_{2,0}
\nonumber\\
&+C\frac{M^2}{N} h^{2k}\vertiii{ u_j }^2_{2,k+1}+C\frac{M^2}{N} h^{2k}\vertiii{ u_j}^4_{4, k+1}+C\frac{M^3}{N} h^{2k}+C\frac{M^2}{N}\Delta t^2\vertiii{ \nabla u_{j,t}}^2_{2,0}\nonumber\\
&+CM^2N h^{2s+2}\vertiii{p_{j}}_{2,s+1}^{2}+C\frac{M^2}{N}h^{2k+2}\vertiii{u_{j,t}}_{2,k+1}^{2} +C\frac{M^2}{N}\Delta t^{2}\vertiii{u_{j,tt}}_{2,0}^{2}\nonumber\\
& +C N h^{2k}\vertiii{ \phi_j }^2_{2,k+1}+CN\Vert B\Vert_{L^{\infty}}^2 h^{2k+2}\vertiii{ u_j }^2_{2,k+1}+ CN\Delta t^2\vertiii{ \nabla \phi_{j,t}}^2_{2,0}\nonumber\\
&+C Nh^{2k+2}\vertiii{ \phi_j }^2_{2,k+1}+CN\Vert B\Vert_{L^{\infty}}^2\Delta t^2\vertiii{ u_{j,t}}^2_{2,0}
\bigg\}+\Vert \eta_{j}^{n}\Vert^{2}+\frac{1}{2}\sum_{l=0}^{n-1}\|\eta_{j}^{l+1}-\eta_{j}^{l}\|^{2}\nonumber\\
&+\Delta t\sum_{l=0}^{n-1}
\frac{N}{2M^2}\Vert\nabla \eta_{j}^{l+1}\Vert^{2}+N\Delta t\Vert \eta_{j}^{n}\times B\Vert^2+N\Delta t\Vert \nabla \xi_{j}^{n}\Vert^2 \nonumber\\
&+N\Delta t\sum_{l=0}^{n-1}\Vert -\nabla \xi_{j}^{l}+\eta_{j}^{l+1}\times B \Vert^2+N\Delta t\sum_{l=0}^{n-1}\Vert -\nabla \xi_{j}^{l+1}+\eta_{j}^{l}\times B \Vert^2\nonumber\\
&+N\Delta t\sum_{l=0}^{n-1}\Vert \nabla \xi_{j}^{l+1}\Vert^2 
+\frac{N}{2M}\Delta t
\Vert\nabla \eta_{j}^{n}\Vert^{2}.\nonumber
\end{align}

\noindent Applying the interpolation inequalities \eqref{Interp1}, \eqref{interp2}, and \eqref{interp3} and absorbing constants into a new constant $C$ yields

\begin{align}\label{ineq:errlast3}
&\Vert e_{u,j}^{n}\Vert^{2}+\frac{1}{2}\sum_{l=0}^{n-1}\|e_{u,j}^{l+1}-e_{u,j}^{l}\|^{2}+\Delta t\sum_{l=0}^{n-1}
\frac{N}{2M^2}\Vert\nabla e_{u,j}^{l+1}\Vert^{2}\\
&\qquad +N\Delta t\Vert e_{u,j}^{n}\times B\Vert^2+N\Delta t\Vert \nabla e_{\phi, j}^{n}\Vert^2 
+N\Delta t\sum_{l=0}^{n-1}\Vert -\nabla e_{\phi, j}^{l}+e_{u,j}^{l+1}\times B \Vert^2\nonumber\\
&\qquad+N\Delta t\sum_{l=0}^{n-1}\Vert -\nabla e_{\phi, j}^{l+1}+e_{u,j}^{l}\times B \Vert^2+N\Delta t\sum_{l=0}^{n-1}\Vert \nabla e_{\phi, j}^{l+1}\Vert^2 
+\frac{7N}{12M^2}\Delta t
\Vert\nabla e_{u, j}^{n}\Vert^{2}\nonumber\\
&\leq e^{\frac{T\tilde{C}}{1-\Delta t \tilde{C}}}
\bigg\{\Delta t\left(\frac{CM^6}{N^3 }+13N \Vert B\Vert_{L^{\infty}}^2\right)\| e^{0}_{u,j} \|^{2}+  \Vert e
_{u, j}^{0}\Vert^{2}+N\Delta t\Vert \nabla e_{\phi, j}^{0}\Vert^2\nonumber\\
&+\frac{7N}{12M^2}\Delta t
\Vert\nabla e_{u,j}^{0}\Vert^{2}+C\left(\frac{CM^6}{N^3 }+12N \Vert B\Vert_{L^{\infty}}^2\right)h^{2k+2}\Delta t\| u^{0}_{j} \|^{2}_{k+1}+  Ch^{2k+2}\Vert u
_{j}^{0}\Vert^{2}_{k+1}\nonumber\\
&+CN\Vert B\Vert^2_{\infty}h^{2k+2}\Delta t\Vert u_{j}^{0}\Vert_{k+1}^2+CNh^{m}\Delta t\Vert \phi_{j}^{0}\Vert_{m+1}^2+C\frac{N}{2M^2}h^{2k}\Delta t
\Vert u_{j}^{0}\Vert_{k+1}^{2}\nonumber\\
&+C\frac{N}{M^2} h^{2k}\vertiii{ u_j }^2_{2,k+1}+C h^{2k+1}\Delta t^{-1}\vertiii{ u_j }^2_{2,k+1}+ Ch\Delta t\vertiii{ \nabla u_{j,t}}^2_{2,0}
\nonumber\\
&+C\frac{M^2}{N} h^{2k}\vertiii{ u_j }^2_{2,k+1}+C\frac{M^2}{N} h^{2k}\vertiii{ u_j}^4_{4, k+1}+C\frac{M^3}{N} h^{2k}+C\frac{M^2}{N}\Delta t^2\vertiii{ \nabla u_{j,t}}^2_{2,0}\nonumber\\
&+CM^2N h^{2s+2}\Vert|p_{j}|\Vert_{2,s+1}^{2}+C\frac{M^2}{N}h^{2k+2}\Vert
|u_{j,t}|\Vert_{2,k+1}^{2} +C\frac{M^2}{N}\Delta t^{2}\Vert|u_{j,tt}|\Vert_{2,0}^{2}\nonumber\\
& +C N h^{2m}\vertiii{ \phi_j }^2_{2,m+1}+CN\Vert B\Vert_{L^{\infty}}^2 h^{2k+2}\vertiii{ u_j }^2_{2,k+1}+ CN\Delta t^2\vertiii{ \nabla \phi_{j,t}}^2_{2,0}\nonumber\\
&+C Nh^{2m+2}\vertiii{ \phi_j }^2_{2,m+1}+CN\Vert B\Vert_{L^{\infty}}^2\Delta t^2\vertiii{ u_{j,t}}^2_{2,0}
\bigg\}+Ch^{2k+2}\vertiii{u_j}^{2}_{\infty, k+1}\nonumber\\
&+Ch^{2k+2}\Delta t \vertiii{u_{j,t}}_{2, k+1}+C
\frac{N}{M^2}h^{2k}\vertiii{u_{j}}_{2, k+1}+CN\Vert B\Vert_{L^{\infty}}^2 h^{2k+2}\Delta t\vertiii{ u_{j}}_{\infty, k+1}\nonumber\\
&+CNh^{2m}\Delta t\vertiii{  \phi_{j}}_{\infty, m+1} +CNh^{2m} \vertiii{\phi_j}_{2,m+1}+CN\Vert B\Vert_{L^{\infty}}^2h^{2k+2}\vertiii{u_j}_{2, k+1}\nonumber\\
&+CNh^{2m}\vertiii{ \phi_{j}}_{2, m+1} 
+C\frac{N}{M}h^{2k}\Delta t
\vertiii{ u_{j}}_{\infty, k+1}.\nonumber
\end{align}

This completes the proof of Theorem  \ref{th:errBEFE-Ensemble}.
\end{proof}
}

\section{Numerical Experiments}

In this section we present numerical experiments for Algorithm \ref{Algo} demonstrating the convergence and stability theorems proven in the previous sections. For all examples we will use the finite element triplet ($P^{2}$--$P^{1}$--$P^{2 }$) and the finite element software package FEniCS \cite{FENICS}.

\subsection{Convergence Test}
For our first test problem we verify the convergence rates proven in section \ref{err_analysis} using a variation of the test problem used in \cite{Layton2014}. Take the time interval $0 \leq t \leq 1$, M = 16, N = 20, $\Omega = [0,\pi]^{2}$, and the imposed magnetic field $B = (0,0,1)$. 
We consider the true solution $(u,p,\phi)$ given by
\begin{equation}
\left\{\begin{aligned}
&u_{\epsilon} = (5\cos(5x)\sin(5y),-5\sin(5x)\cos(5y),0)(1 + \epsilon)e^{-5t}, \\
&p = 0,\\
&\phi_{\epsilon} = (\cos(5x)\cos(5y) + x^{2} - y^{2})(1 + \epsilon)e^{-5t},
\end{aligned}\right.
\end{equation}
where $\epsilon$ is a given perturbation. For this problem we will consider two perturbations $\epsilon_{1} = 10^{-3}$ and $\epsilon_{2} = -10^{-3}$. The boundary conditions are taken to be $u_{h} = u_{\epsilon}$ and $\phi_{h} = \phi_{\epsilon}$ on $\partial\Omega$. The initial conditions and source terms are chosen to correspond with the exact solution for the given perturbation. As can be seen in tables \ref{exper1_table1} \ref{exper1_table2} \ref{exper1_table3} and \ref{exper1_table4} we achieve the expected convergence rates.

\begin{table}
	\begin{center}
	\begin{tabular}{  l c c c c c   }
		\hline
		$h$ & $\Delta t$ & $\|u_{1} - u_{1,h} \|_{\infty,0}$ & Rate & $\|\nabla u_{1} - \nabla u_{1,h} \|_{2,0}$ & Rate    \\ \hline
		1/20 & 1/160 & 8.323e-1 & -  & 4.847e+0 & -    \\ 
		1/40 & 1/320 & 5.141e-1 & 0.695 & 2.787e+0 & 0.798  \\
		1/60 & 1/480 & 3.615e-1 & 0.869 & 1.942e+0 & 0.891   \\
		1/80 & 1/640 & 2.779e-1 & 0.915 & 1.489e+0 & 0.924   \\
		1/120 & 1/960 & 1.895e-1 & 0.945 &1.014e+0 & 0.946   \\
	\end{tabular}
	\caption{Error and convergence rates for the first ensemble member in $u_{h}$ and $\nabla u_{h}$ }
	\label{exper1_table1}
	\end{center}
	\end{table}

\begin{table}
\begin{center}
	\begin{tabular}{  l c c c c c   }
		\hline
		$h$ & $\Delta t$ & $\|\phi_{1} - \phi_{1,h} \|_{\infty,0}$ & Rate & $\|\nabla \phi_{1} - \nabla \phi_{1,h} \|_{2,0}$ & Rate    \\ \hline
		1/20 & 1/160 & 1.358e-1 & -  & 7.188e-1 & -    \\ 
		1/40 & 1/320 & 8.451e-2 & 0.684 & 4.250e-1 & 0.758  \\
		1/60 & 1/480 & 5.957e-2 & 0.862 & 2.962e-1 & 0.891   \\
		1/80 & 1/640 & 4.587e-2 & 0.909 & 2.269e-1 & 0.927   \\
		1/120 & 1/960 & 3.135e-2 & 0.939 &1.543e-1 & 0.951   \\
	\end{tabular}
	\caption{Error and convergence rates for the first ensemble member in $\phi_{h}$ and $\nabla \phi_{h}$ }
	\label{exper1_table2}
\end{center}
\end{table}

\begin{table}
	\begin{center}
	\begin{tabular}{  l c c c c c   }
		\hline
		$h$ & $\Delta t$ & $\|u_{2} - u_{2,h} \|_{\infty,0}$ & Rate & $\|\nabla u_{2} - \nabla u_{2,h} \|_{2,0}$ & Rate    \\ \hline
		1/20 & 1/160 & 8.305e-1 & -  & 4.836e+0 & -    \\ 
		1/40 & 1/320 & 5.130e-1 & 0.695 & 2.781e+0 & 0.798  \\
		1/60 & 1/480 & 3.606e-1 & 0.869 & 1.938e+0 & 0.891   \\
		1/80 & 1/640 & 2.772e-1 & 0.915 & 1.486e+0 & 0.924   \\
		1/120 & 1/960 & 1.890e-1 & 0.944 &1.012e+0 & 0.946   \\
	\end{tabular}
	\caption{Error and convergence rates for the second ensemble member in $u_{h}$ and $\nabla u_{h}$ }
	\label{exper1_table3}
	\end{center}
	\end{table}

\begin{table}
\begin{center}
	\begin{tabular}{  l c c c c c   }
		\hline
		$h$ & $\Delta t$ & $\|\phi_{2} - \phi_{2,h} \|_{\infty,0}$ & Rate & $\|\nabla \phi_{2} - \nabla \phi_{2,h} \|_{2,0}$ & Rate    \\ \hline
		1/20 & 1/160 & 1.355e-1 & -  & 7.173e-1 & -    \\ 
		1/40 & 1/320 & 8.431e-2 & 0.684 & 4.241e-1 & 0.758  \\
		1/60 & 1/480 & 5.944e-2 & 0.862 & 2.956e-1 & 0.891   \\
		1/80 & 1/640 & 4.576e-2 & 0.909 & 2.264e-1 & 0.927   \\
		1/120 & 1/960 & 3.127e-2 & 0.939 &1.540e-1 & 0.951   \\
	\end{tabular}
	\caption{Error and convergence rates for the second ensemble member in $\phi_{h}$ and $\nabla \phi_{h}$ }
	\label{exper1_table4}
\end{center}
\end{table}

\subsection{Efficiency Test}
For our second experiment we will consider the same setting as the first numerical experiment except we will use $11$ perturbations $\epsilon_{i} = 10^{-2} - .0009*i, i = 0,\ldots,10$. In order to measure the efficiency of the ensemble method we compare the CPU time measured in seconds and accuracy of Algorithm \ref{Algo} versus the non-ensemble IMEX version of Algorithm \ref{Algo} in terms of the averages $\bar{u}^n$ and $\bar{\phi}^{n}$. For both algorithms we will use the direct LU solver MUMPS \cite{MUMPS1} \cite{MUMPS2}. We see in tables \ref{en_effic} and \ref{ser_effic} that the ensemble algorithm is able to achieve similar accuracy to the non-ensemble algorithm with significant cost savings.
\begin{table}
	\begin{center}
		\begin{tabular}{  l c c c c    }
			\hline
			$h$ & $\Delta t$ & $\|\bar{u} - \bar{u}_{en,h} \|_{\infty,0}$ & $\|\bar{\phi} - \bar{\phi}_{en,h} \|_{\infty,0}$ & CPU time    \\ \hline
			1/20 & 1/160 & 8.355e-1 & 1.362e-1  & 1.3134e+2  \\ 
			1/40 & 1/320 & 5.157e-1 & 8.477e-2 & 9.1382e+2   \\
			1/60 & 1/480 & 3.623e-1 &5.972e-2 & 3.3533e+3    \\
			1/80 & 1/640 & 2.783e-1 & 4.595e-2 & 8.2587e+3   \\
			1/120 & 1/960 & 1.894e-1 & 3.137e-2 &2.6761e+4   \\
		\end{tabular}
		\caption{Error and CPU time for computing $\bar{u}_{h}$ and $\bar{\phi}_{h}$ with Algorithm \ref{Algo}}
		\label{en_effic}
	\end{center}
\end{table}    

\begin{table}
	\begin{center}
		\begin{tabular}{  l c c c c    }
			\hline
			$h$ & $\Delta t$ & $\|\bar{u} - \bar{u}_{ser,h} \|_{\infty,0}$ & $\|\bar{\phi} - \bar{\phi}_{ser,h} \|_{\infty,0}$ & CPU time    \\ \hline
			1/20 & 1/160 & 8.355e-1 & 1.362e-1  & 2.5471e+2  \\ 
			1/40 & 1/320 & 5.157e-1 & 8.477e-2 & 1.6840e+3   \\
			1/60 & 1/480 & 3.623e-1 & 5.972e-1 & 8.1010e+3    \\
			1/80 & 1/640 & 2.783e-1 & 4.595e-2 & 2.0536e+4   \\
			1/120 & 1/960 & 1.894e-1 & 3.137e-2 &4.3062e+4   \\
		\end{tabular}
		\caption{Error and CPU time for computing $\bar{u}_{h}$ and $\bar{\phi}_{h}$ serially with the non-ensemble method. }
		\label{ser_effic}
	\end{center}
\end{table}    

\subsection{Stability Test}

In this experiment we test the time step restriction for the stability of our algorithm by using a variation on the test for liquid aluminum performed in \cite{LTT14}. Let $0 \leq t \leq 1$, M = 12255, N = 347, $\Omega = [0,10^{-1}]^{2}$, and the imposed magnetic field $B = (0,0,1)$. We take $f$ and the boundary conditions equal to $0$ and the initial conditions to be equal to 
\begin{equation*}
\begin{aligned}
u_{0}(x,y,\epsilon) &= (10\pi \cos(10 \pi x) \sin(10 \pi y)  , -10 \pi \sin(10 \pi x) cos(10 \pi y),0) (1 + \epsilon), \\
\phi_{0}(x,y, \epsilon) &= ( \cos(10 \pi x) \cos(10 \pi y) + x^{2} - y^{2}) (1 + \epsilon),
\end{aligned}
\end{equation*}
for which we will consider the two perturbations $\epsilon_{1} = 10^{-1}$ and $\epsilon_{2} = 10^{-2}$. Due to the fact that there is no external energy exchange or body forces the energy in the system should decay to $0$ over time assuming the algorithm is stable. For $h = \frac{1}{10}$ we compute the average energy $E^{n} = \frac{1}{2} \|\bar{\phi}^{n}\|^{2} + \frac{1}{2} \|\bar{u}^{n}\|^{2}$ over a number of different time steps. As we can see in figure \ref{exper3_fig1} our method is unstable for $\Delta t = \frac{1}{10}, \frac{1}{100}$, but becomes stable with $\Delta t = \frac{1}{1000}$ .

\begin{figure}[h]
\centering
\includegraphics[height=8cm]{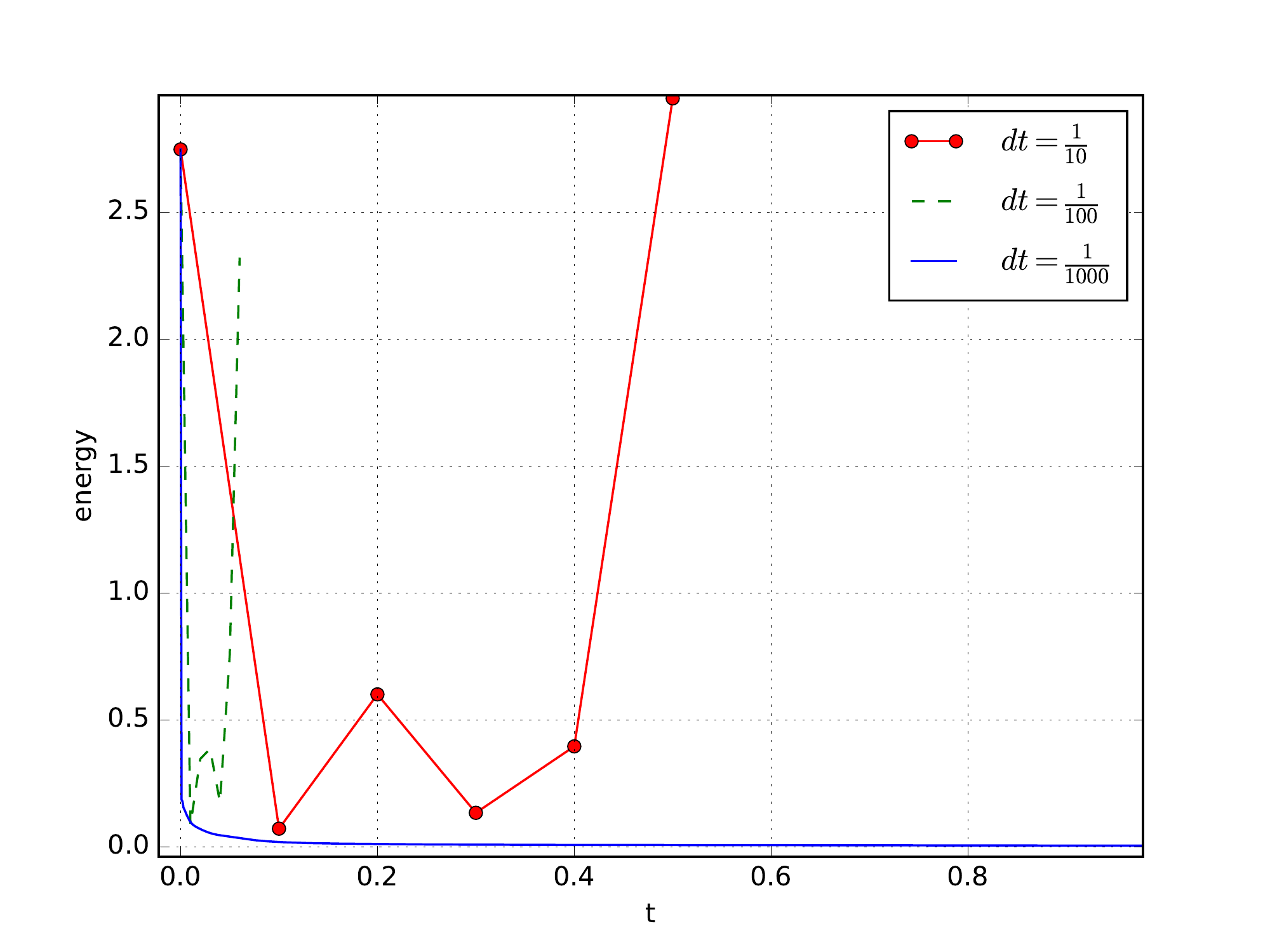} 
\vspace{-.1in}
\caption{The decay of the system energy for algorithm \eqref{Algo} with several different time steps.}
\label{exper3_fig1}
\end{figure}

\end{document}